\newtheorem{thm}{Theorem}
\newtheorem{defn}[thm]{\textbf{Definition}}
\newtheorem{cor}[thm]{Corollary}
\newtheorem{lem}[thm]{Lemma}
\newtheorem{prop}[thm]{Proposition}
\theoremstyle{definition}
\theoremstyle{definition}
\theoremstyle{remark}
\newtheorem{rem}[thm]{Remark}
\renewcommand{\k}{\Bbbk}
\renewcommand{\d}{\Delta}
\newcommand{\al}{\alpha}
\newcommand{\be}{\beta}
\newcommand{\ot}{\otimes}
\newcommand{\rt}{\rightarrow}
\title{Nearly Frobenius structures in some families of algebras}
\author{Dalia Artenstein, Ana Gonz\'alez, Gustavo Mata.
}
\date{\today}
\begin{document}
\maketitle

\begin{abstract}
In this article we continue with the study started in \cite{AGL15} of nearly Frobenius structures in some representative families of finite dimensional algebras, as radical square zero algebras, string algebras and toupie algebras. We prove that such radical square zero algebras with at least one path of length two are nearly Frobenius. As for the string algebras, those who are not gentle can be endowed with at least one non-trivial nearly Frobenius structure. Finally, in the case of toupie algebras, we prove that the existence of monomial relations is a sufficient condition to have non-trivial nearly Frobenius structures. Using the technics developed for the previous families of algebras we prove sufficient conditions for the existence of non-trivial Frobenius structures in quotients of path algebras in general.
\end{abstract}

Keywords: nearly Frobenius, string algebra, radical square zero algebra.\\

MSC: 16W99, 16G99.

\section{Introduction}
\label{intro}
 
It is a well known result that \emph{the Poincar\'e algebra} $A=H^\ast(M)$ associated to a compact closed manifold $M$  with trace
$$\varepsilon(w)=\int_Mw,$$
for $w\in H^\ast(M)$ is a Frobenius algebra. This is not the case for a non-compact manifold $M$, but we may ask what structure remains. The answer may be stated nowaday as follows: the cohomology algebra is a nearly Frobenius algebra.

The concept of nearly Frobenius algebra was developed in the thesis of the second author of this article, the study of these objects was motivated by the result proved in \cite{CG04}, which states that: \emph{the homology of the free loop space $H^*(LM)$ has the structure of a Frobenius algebra without counit}. Later, these objets were studied in \cite{GLSU} and their algebraic properties were developed in \cite{AGL15}. In particular, the possible nearly Frobenius structures in gentle algebras were described.

In the framework of differential graded algebras, Abbaspour considers  in \cite{A}  nearly Frobenius algebras that he calls open Frobenius algebras. He proves that if $\mathcal{A}$ is a symmetric open Frobenius algebra of degree $m$, then $HH_{\ast}(\mathcal{A},\mathcal{A})[m]$ is an open Frobenius algebra. 

The Frobenius algebra structures in quotients of path algebras have been deeply studied. In particular, there is only a small family of monomial algebras that admits Frobenius structure. The next result illustrates this assertion.
\begin{lem}[\cite{CHYZ}, Lemma 2.2]
Let $\mathcal{A}$ be an indecomposable monomial algebra. Then $\mathcal{A}$ is Frobenius if and only if $\mathcal{A} = \k$, or $\mathcal{A}\cong \frac{\k Z_n}{J^d}$ for some positive integers $n$ and $d$, with $d\geq 2$, where  $Z_n$ the basic oriented cycle of length $n$ and $J$ is the ideal generated by the arrows.
\end{lem}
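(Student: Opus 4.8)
The plan is to reduce the statement to the classification of self-injective Nakayama algebras, using that a monomial algebra $\mathcal{A}=\k Q/I$ is \emph{basic}. For basic algebras the Frobenius and self-injective conditions coincide: writing $D$ for the $\k$-dual, one has $\mathcal{A}_{\mathcal{A}}\cong\bigoplus_i e_i\mathcal{A}$ as a direct sum of pairwise non-isomorphic indecomposable projectives, and $D(\mathcal{A})\cong\bigoplus_i I_i$ as a sum of pairwise non-isomorphic indecomposable injectives, each appearing with multiplicity one. Hence $\mathcal{A}$ is self-injective (every projective is injective) exactly when these two families of summands agree, i.e. exactly when $\mathcal{A}\cong D(\mathcal{A})$ as right modules, which is the Frobenius condition. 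So I would instead prove the equivalent statement that an indecomposable monomial algebra is \emph{self-injective} iff it is $\k$ or $\k Z_n/J^d$ with $d\ge 2$. The ``if'' direction is standard (the truncated cyclic algebras $\k Z_n/J^d$ are self-injective and $\k$ is trivially Frobenius), so the content lies in the ``only if'' direction.

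First I would extract quiver constraints from self-injectivity. Since every indecomposable injective module has simple socle, self-injectivity forces each indecomposable projective $e_i\mathcal{A}$ to have simple socle. For a monomial algebra a basis of $e_i\mathcal{A}$ is given by the nonzero paths starting at $i$, and $\mathrm{soc}(e_i\mathcal{A})$ is spanned by the \emph{right-maximal} such paths, namely those $p\notin I$ with $p\alpha\in I$ for every arrow $\alpha$. If a vertex $i$ had two distinct arrows $\alpha\neq\beta$ leaving it, then, using $I\subseteq J^2$ so that arrows are nonzero, each of $\alpha,\beta$ extends to a right-maximal nonzero path beginning respectively with $\alpha$ and with $\beta$; these are distinct basis elements of the socle, contradicting simplicity. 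Thus every vertex has out-degree $\le 1$. Applying the same argument to $\mathcal{A}^{\mathrm{op}}=\k Q^{\mathrm{op}}/I^{\mathrm{op}}$, which is again monomial and self-injective, yields in-degree $\le 1$.

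Next I would rule out sinks and sources. If $i$ is a sink then $e_i\mathcal{A}=\k e_i$, so $S_i$ is projective; self-injectivity makes $S_i$ injective as well, which forces $\mathcal{A}e_i=\k e_i$, so $i$ is also a source and hence isolated. By indecomposability $Q$ would then be a single point and $\mathcal{A}=\k$. Otherwise $Q$ has no sinks or sources, so combined with the degree bounds every vertex has in- and out-degree exactly $1$; a connected quiver with this property is a single oriented cycle $Z_n$. On $Z_n$ the paths from a fixed vertex are totally ordered by length, so each $e_i\mathcal{A}$ is uniserial and $\mathcal{A}$ is a Nakayama algebra.

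Finally I would identify the ideal. Writing $c_i=\dk e_i\mathcal{A}$ for the Kupisch series, I would match each uniserial projective $e_i\mathcal{A}$ with the indecomposable injective having the same simple socle; self-injectivity equates their lengths and forces the $c_i$ to be constant, equal to some $d$, whence $I=J^d$ and $\mathcal{A}\cong\k Z_n/J^d$. Here $d\ge 2$, since $d=1$ makes $\mathcal{A}$ semisimple and hence decomposable unless $n=1$, the latter case being precisely $\mathcal{A}\cong\k$. I expect this last step to be the main obstacle: pinning down that the relations are exactly $J^d$, rather than an uneven family of cyclic relations, is where the full force of the hypothesis must be used. The earlier degree bounds only determine the shape of $Q$, and it is the constancy of the Kupisch series, equivalently the exact matching of projectives with injectives around the cycle, that selects the truncation $J^d$.
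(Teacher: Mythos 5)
The paper offers no proof of this lemma: it is imported verbatim from \cite{CHYZ} (Lemma 2.2), so there is no internal argument to compare yours against; I can only judge the proposal on its own terms, and on those terms it is correct and follows the standard classification route (Frobenius $=$ self-injective for the basic algebra $\k Q/\mathcal{I}$, simple socles of indecomposable projectives, degree bounds on $Q$, elimination of sinks and sources, then the self-injective cyclic Nakayama case). Two remarks. First, the place where the monomial hypothesis genuinely enters is your description of $\mathrm{soc}(e_i\mathcal{A})$ as the span of the right-maximal nonzero paths: this works because distinct nonzero paths are linearly independent in a monomial algebra, and it is worth saying so, since the degree bounds fail for general bound quiver algebras (e.g.\ canonical algebras). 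Second, the step you flag as the main obstacle closes more easily than you fear. On $Z_n$ the nonzero paths starting at $i$ are exactly those of length $0,\dots,c_i-1$ (the set is closed under truncation), so $\mathrm{soc}(e_i\mathcal{A})$ is spanned by the path of length $c_i-1$ from $i$, which ends at $j=i+c_i-1$; injectivity identifies $e_i\mathcal{A}$ with $D(\mathcal{A}e_{j})$, hence $c_i=\dim_\k \mathcal{A}e_{j}$, which says the longest nonzero path ending at $j$ has length $c_i-1$. The path of length $c_i$ ending at $j$ starts at $i-1$ and is therefore zero, giving $c_{i-1}\leq c_i$; going once around the cycle forces all $c_i$ equal to some $d$, whence $\mathcal{I}=J^d$, and $d\geq 2$ simply because admissibility ($\mathcal{I}\subseteq R^2_Q$) keeps every arrow nonzero --- your detour through semisimplicity for $d=1$ is unnecessary. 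With that paragraph added, the proof is complete.
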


If we are interested in finding nearly Frobenius structures the situation is quite different. A large number of families of finite dimensional algebras can be endowed with nearly Frobenius structure. In particular, in this work, we study the nearly Frobenius structures in radical square zero algebras, string algebras and toupie algebras.

The radical square zero algebras are finite dimensional algebras over a field $\k$ such that the square of its Jacobson radical is already zero. They have been extensively studied in representation theory because their behaviour provides interesting examples and results. Some beautiful results about them are the following:
a radical square zero algebra is of representation finite type if and only if its separated quiver is a finite disjoint union of Dynkin diagrams (see \cite[Chapter X2]{ARS}).
A connected radical square zero algebra is either self-injective or CM-free (see \cite{C}). As a consequence of the last result, we have that a connected radical square zero algebra is Gorenstein if and only if its valued quiver is either an oriented cycle with the trivial valuation or does not contain oriented cycles.

String algebras are on one hand special biserial algebras whose ideal of relations can be generated by paths and on the other hand a generalization of gentle algebras.
The class of special biserial algebras was introduced by Skowro\'nski and Waschbüsch in \cite{SW}. It has played an important role in the study of self-injective algebras. Special biserial algebras and in particular string algebras have a well-understood representation theory. In fact, if $A$ is special biserial, then it has a two-sided ideal $S$ such that the quotient $A/S$ is a monomial algebra, and actually a string algebra.

The other family of algebras studied in this article are the toupie algebras. They were first introduced in ``Toupie algebras, some examples of Laura algebra'' by Diane Castonguay, Julie Dionne, Francois Huard and Marcelo Lanzilotta (see \cite{CDHL} for more details).
These algebras combine features of canonical algebras with monomial algebras. They are
quotients of the path algebra of a finite quiver $Q$ which has a source $0$, a sink $\omega$ and branches
going from $0$ to $\omega$. The ideal of relations $\mathcal{I}\subset Q_{\leq2}$ can be generated by a set containing two types
of relations: monomial ones, which involve arrows of one branch each, and linear combinations
of branches.\\

This work is developed as follows. In section 2 is devoted to study radical square zero algebras, in particular we determine the Frobenius dimension for all them. In Section 3 we study the string algebras and we prove that if $A $ is a not gentle string algebra then it admits at least one non-trivial nearly Frobenius coproduct. To finish the study of the algebras mentioned above we describe, in Section 4, the toupie algebras and the conditions ensuring that a toupie algebra admits nearly Frobenius coproducts. 
In the last section, based on all the previous results, we give conditions on quotients of path algebras so that they admit nearly Frobenius structures.


\section{Preliminaries}

Throughout this article $\Bbbk$ always denote a field. We denote by $\mathcal{A}$ a finite dimensional $\Bbbk$-algebra, and by Proposition 6 of $\cite{AGM}$, $\mathcal{A}$ can be considered as a connected algebra.

\begin{defn}
A \emph{quiver} $Q = \bigl(Q_0,Q_1, s, t\bigr)$ is a quadruple consisting of two sets: $Q_0$ (whose elements are called \emph{points}, or \emph{vertices}) and $Q_1$ (whose elements are called \emph{arrows}), and two maps $s, t : Q_1 \rt Q_0$ which associate to each arrow $\al\in Q_1$ its \emph{source} $s(\al)\in Q_0$ and its \emph{target} $t(\al)\in Q_0$, respectively.
\end{defn}
An arrow $\al\in Q_1$ of source $a = s(\al)$ and target $b = t(\al)$ is usually denoted by $\al: a\rt b$. A quiver $Q = \bigl(Q_0,Q_1, s,t\bigr)$ is usually denoted simply by $Q$. Thus, a quiver is nothing but an oriented graph without any restriction on the number of arrows between two points, the existence of loops or oriented cycles.

\begin{defn}
Let $Q = \bigl(Q_0,Q_1, s, t\bigr)$ be a quiver and $a, b\in Q_0$. A \emph{path} of \emph{length} $l\geq 1$ with source $a$ and target $b$ (or, more briefly, from $a$ to $b$) is a sequence
$$\bigl(a |\al_1,\al_2,\dots,\al_l| b\bigr),$$
where $\al_k\in Q_1$ for all $1\leq k\leq l$, $s\bigl(\al_1\bigr)=a$, $t\bigl(\al_k\bigr)=s\bigl(\al_{k+1}\bigr)$ for each $1\leq k<l$, and $t\bigl(\al_l\bigr)=b$. Such a path is
denoted briefly by $\al_1\al_2\dots\al_l$.
\end{defn}

\begin{defn}
Let $Q$ be a quiver. The \emph{path algebra} $\k Q$ is the $\k$-algebra whose underlying $\k$-vector space has as its basis the set of all paths $\bigl(a|\al_1,\al_2,\dots,\al_l|b\bigr)$ of length
$l\geq 0$ in $Q$ and such that the product of two basis vectors\\
$\bigl(a|\al_1,\al_2,\dots,\al_l|b\bigr)$ and $\bigl(c|\be_1,\be_2,\dots,\be_k|d\bigr)$ of $\k Q$ is defined by
$$\bigl(a|\al_1,\al_2,\dots,\al_l|b\bigr)\bigl(c|\be_1,\be_2,\dots,\be_k|d\bigr)=\delta_{bc}\bigl(a|\al_1,\dots,\al_l,\be_1,\dots,\be_k|d),$$
where $\delta_{bc}$ denotes the Kronecker delta. In other words, the product of two paths $\al_1\dots\al_l$ and $\be_1\dots\be_k$ is equal to zero if $t\bigl(\al_l\bigr) \neq s\bigl(\be_1\bigr)$ and is
equal to the composed path $\al_1\dots\al_l\be_1\dots\be_k$ if $t\bigl(\al_l\bigr) = s\bigl(\be_1\bigr)$. The product of basis elements is, then, extended to arbitrary elements of $\k Q$ by 
distributivity.
\end{defn}

Let $Q$ be a quiver and  $\k Q$ be the associated path algebra. Denote by $R_Q$ the two-sided ideal in $\k Q$ generated by all paths of length 1, i.e. all arrows. This ideal is known as the \emph{arrow ideal}.\\
It is easy to see that, for any $m\geq 1$ we have that $R^m_Q$ is a two-sided ideal generated by all paths of length $m$. Note that we have the following chain of ideals:
$$R^2_Q\supseteq R^3_Q \supseteq R^4_Q\supseteq\cdots$$

\begin{defn}
A two-sided ideal $\mathcal{I}$ in $\k Q$ is said to be \emph{admissible} if there exists $m\geq 2$ such that $$R^m_Q\subseteq \mathcal{I}\subseteq R^2_Q.$$

Also, the algebra $\frac{\Bbbk Q}{\mathcal{I}}$ is \emph{monomial} if $\mathcal{I}$ is generated by paths.
\end{defn}

An algebra $\frac{\Bbbk Q}{\mathcal{I}}$ is a connected algebra if and only if $Q$ is a connected quiver (See Lema 1.7, chapter II of \cite{ASS}), so, from now on, we only consider connected quivers. 

\begin{defn}
	An algebra $\mathcal{A}$ is a \emph{nearly Frobenius algebra} if it admits a linear map $\Delta:\mathcal{A}\rt \mathcal{A}\ot \mathcal{A}$ such that
	$$\xymatrix{\mathcal{A}\otimes \mathcal{A}\ar[r]^{m}\ar[d]_{\Delta\otimes 1}& \mathcal{A}\ar[d]^{\Delta}\\
		\mathcal{A}\otimes \mathcal{A}\otimes \mathcal{A}\ar[r]_{1\otimes m}&\mathcal{A}\otimes \mathcal{A}
	},\quad \xymatrix{\mathcal{A}\otimes \mathcal{A}\ar[r]^{m}\ar[d]_{1\otimes\Delta}& \mathcal{A}\ar[d]^{\Delta}\\
		\mathcal{A}\otimes \mathcal{A}\otimes \mathcal{A}\ar[r]_{m\otimes 1}&\mathcal{A}\otimes \mathcal{A}
	}$$ commute.
\end{defn}

In \cite{AGM} was showed that the previous definition agree with the definition given in \cite{AGL15}.

\begin{defn}
	The \emph{Frobenius space} associated to an algebra $\mathcal{A}$ is the vector space of all the possible coproducts $\d$ that make it into a nearly Frobenius algebra ($\mathcal{E}$), see \cite{AGL15}. Its dimension over $\k$ is called the \emph{Frobenius dimension} of $\mathcal{A}$, that is,
	$$\operatorname{Frobdim}\mathcal{A} = \operatorname{dim}_\k\mathcal{E}.$$
\end{defn}

Gabriel's Theorem states that if $\mathcal{A}$ is a basic and connected finite dimensional $\k$-algebra over a algebraically closed field $\k$, there exist a quiver $Q$ and an ideal $\mathcal{I}$ of $\k Q$ such that $\mathcal{A} \cong \frac{\k Q}{\mathcal{I}}$. This motivates us to study the existence of nearly Frobenius structures on quotients of path algebras.  


\section{Radical square zero algebras}

\begin{defn}
A \textbf{radical square zero algebra} is a finite dimensional algebra over al field $\k$ such that the square of its Jacobson radical is already zero.
\end{defn}

Now, if we consider a radical square zero algebra $\mathcal{A}$ associated to $Q$, we can determine the general expression of a nearly Frobenius coproduct over a general vertex of $Q$ and, using that the composition of two arrows is zero, determine the Frobenius dimension of the algebra $\mathcal{A}$.

\begin{rem}\label{lemita}
Let $\mathcal{A}=\frac{\k Q}{\mathcal{I}}$ be a radical square zero algebra and $p\in Q_0$. We can describe all the possible coproducts over $p$, depending if $p$ is sink, source, or an intermediate vertex.

\end{rem}
First, remember that if $\d$ is a nearly Frobenius coproduct then $\d\bigl(e_p\bigr)=\bigl(e_p\otimes 1\bigr)\d\bigl(e_p\bigr)=\d\bigl(e_p\bigr)\bigl(1\otimes e_p\bigr)$, for $ p\in Q_{0}$.

\begin{itemize}
  \item If $p$ is a source we can distinguish two different situations depending on the outdegree of $p$. Let $n=gr^{+}(p)$ be the outdegree of $p$.\\
  If $n=1$ the situation is the following
  \begin{center}
  $\xymatrix{p\ar[r]^\beta&q_1\cdots}$,
  \end{center}

  then $\d\bigl(e_p\bigr)=a_0e_p\ot e_p+a_1\beta\ot e_p$ and $\d(\beta)=\d\bigl(e_p\bigr)(1\otimes\be)=a_0e_p\ot \beta +a_1\beta\ot \beta=(\be\ot 1)\d\bigl(e_{q_1}\bigr)$. We deduce that $a_0=0$ and obtain that
  $$\d\bigl(e_p\bigr)=a_1\be\ot e_p.$$
 If $n\geq 2$ we have the following subquiver $$\xymatrix{&&q_1\\ p\ar[rrd]_{\beta_n}\ar[rru]^{\beta_1}&\vdots\\&&q_n}$$\\ $\d\bigl(e_p\bigr)=a_0e_p\ot e_p+\sum_{i=1}^nb_i\be_i\ot e_p$, then $\d\bigl(\be_j\bigr)=\d\bigl(e_p\bigr)\bigl(1\ot\be_j\bigr)=a_0e_p\ot \be_j+\sum_{i=1}^nb_i\be_i\ot \be_j=\bigl(\be_j\ot 1\bigr)\d\bigl(e_{q_j}\bigr)$. Therefore $a_0=0$ and $b_i=0$ for $i\neq j$. Using that $n\geq 2$ we conclude that $b_i=0$ for all $i=1,\dots, n$. Then
 $$\d\bigl(e_p\bigr)=0.$$
 \item If $p$ is a sink, similarly to the previous case, calling $m$ the indegree of $p$, we can conclude that:
if $m=1$ and $\al$ is the arrow ending in $p$, $\d\bigl(e_p\bigr)=a_1e_p\ot \al.$\\


In the case that $m\geq 2$ we have the following situation $$\xymatrix{p_1\ar[rrd]^{\al_m}&&\\ &\vdots& p\\p_m\ar[rru]_{\al_1}&&}$$
Reproducing the argument used in the case where $p$ is a source, we conclude that $\d\bigl(e_p\bigr)=0$.
  \item The last case is when $p$ is an intermediate vertex: Let $m$ be the indegree and $n$ the outdegree of $p$. If $m\geq 2$ and $n\geq 2$ we have the following representation
  $$\xymatrix{
  p_1\ar[rrd]^{\al_1}&&&&q_1\\
  &\vdots& p\ar[rru]^{\be_1}\ar[rrd]_{\be_n}&\vdots\\
  p_m\ar[rru]_{\al_m}&&&&q_n
  }$$
  $$\d\bigl(e_p\bigr)=\bigl(e_p\ot 1\bigr)\d\bigl(e_p\bigr)=\d\bigl(e_p\bigr)\bigl(1\ot e_p\bigr)$$
  $$=a_0e_p\ot e_p+\sum_{i=1}^ma_ie_p\ot\al_i+\sum_{j=1}^nb_j\be_j\ot e_p+\sum_{i,j=1}^{m,n}c_{ij}\be_j\ot\al_i.$$
  $\displaystyle{\d\bigl(\al_k\bigr)=\bigl(\al_k\ot 1\bigr)\d\bigl(e_p\bigr)= a_0\al_k\ot e_p+\sum_{i=1}^ma_i\al_k\ot\al_i=\d\bigl(e_{p_k}\bigr)\bigl(1\ot \al_k\bigr)},$ \\then $a_0=0$, and $a_i=0$ for all $i\neq k$, for $k=1,\dots, m$. Similarly\\
  $\displaystyle{\d\bigl(\be_k\bigr)=\d\bigl(e_p\bigr)\bigl(1\ot\be_k\bigr)= \sum_{j=1}^nb_j\be_j\ot\be_k=\bigl(\be_k\ot 1\bigr)\d\bigl(e_{q_k}\bigr)},$ then $b_j=0$ for all $j\neq k$, for $k=1,\dots, n$. Then
  $$\d\bigl(e_p\bigr)=\sum_{i,j=1}^{m,n}c_{ij}\be_j\ot\al_i.$$
  If one or more arrows are loops, then the result is equal to the previous case.\\
  If $m=1$ and $n=1$ and it is not an isolated loop, then $\displaystyle{\d\bigl(e_p\bigr)=ae_p\ot\al+b\be\ot e_p+c\be\ot\al}$.\\
  If $m=1$ and $n=1$ with loop $\al$ in $e_p$, then $\displaystyle{\d\bigl(e_p\bigr)=a(e_p\ot\al+\al\ot e_p)+b\al\ot\al}$.\\
  If $m=1$ and $n\geq 2$ with local representation of the quiver $$\xymatrix{
  &&&q_1\\
  p_1\ar[r]^{\al}& p\ar[rru]^{\be_1}\ar[rrd]_{\be_n}&\vdots&\\
  &&&q_n
  }$$
  $\displaystyle{\d\bigl(e_p\bigr)=a_0e_p\ot e_p+a_1e_p\ot\al+\sum_{j=1}^nb_j\be_j\ot e_p+\sum_{j=1}^{n}c_{j}\be_j\ot\al}$, then $\d(\al)=a_0\al\ot e_p+a_1\al\ot\al=\d\bigl(e_p\bigr)(1\ot\al)$, therefore $a_0=0$. Similarly, \\
  $\displaystyle{\d\bigl(\be_k\bigr)=\d\bigl(e_p\bigr)\bigl(1\ot \be_k\bigr)=\sum_{j=1}^nb_j\be_j\ot\be_k=(\be\ot 1)\d\bigl(e_{q_1}\bigr)}$, then $b_j=0$ for al $j\neq k$, for $k=1,\dots, n$. Finally, $$\d\bigl(e_p\bigr)=a_1e_p\ot\al+\sum_{j=1}^{n}c_{j}\be_j\ot\al.$$
  If $m=1$ and $n\geq 2$ with a loop $\al$ in $e_p$ and $\be_1 \ldots \be_{n-1}$ arrows starting in $e_p$, then $$\d\bigl(e_p\bigr)=a_1\al \ot\al+\sum_{j=1}^{n-1}c_{j}\be_j\ot\al.$$
  If $m\geq 2$ and $n=1$ and the quiver is locally the following $$\xymatrix{
  p_1\ar[rrd]^{\al_1}&&&\\
  &\vdots& p\ar[r]^{\be}&q_1\\
 p_m\ar[rru]_{\al_m} &&&
  }$$
  As before $\displaystyle{\d\bigl(e_p\bigr)=b_1\be\ot e_p+\sum_{i=1}^mc_i\be\ot\al_i}.$\\
  Finally if  $m\geq 2$ and $n=1$ with a loop $\be$ in $e_p$ and $\al_1 \ldots \al_{m-1}$ arrows ending in $e_p$, then $$\displaystyle{\d\bigl(e_p\bigr)=b_1\be\ot \be+\sum_{i=1}^{m-1}c_i\be\ot\al_i}.$$
\end{itemize}

The previous remark allows us to construct all the nearly Frobenius coproducts that the radical square zero algebra admits.

\begin{cor}
If $\mathcal{A}=\frac{\k Q}{\mathcal{I}}$ with $\mathcal{I}\neq 0$ is a radical square zero algebra then $\operatorname{Frobdim}\mathcal{A}>0$.
\end{cor}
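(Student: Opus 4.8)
The plan is to exhibit a single non-zero coproduct in the Frobenius space, which suffices since $\operatorname{Frobdim}\mathcal{A}=\dk\mathcal{E}$. First I would translate the hypothesis $\mathcal{I}\neq 0$ into a statement about the quiver. Since $\mathcal{A}$ is radical square zero, its Jacobson radical is $R_Q/\mathcal{I}$, and the condition $\operatorname{rad}^2=0$ forces $R_Q^2\subseteq\mathcal{I}$; combined with admissibility $\mathcal{I}\subseteq R_Q^2$ this gives $\mathcal{I}=R_Q^2$. Hence $\mathcal{I}\neq 0$ is equivalent to $R_Q^2\neq 0$, i.e. to the existence of a path of length two in $Q$, say $a\xrightarrow{\al}b\xrightarrow{\be}c$. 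In particular $b$ is an intermediate vertex, with $\al$ among its incoming arrows and $\be$ among its outgoing arrows.

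Next I would use the description obtained in Remark \ref{lemita} to build a coproduct supported only at $b$. Concretely, set $\d(e_b)=\be\ot\al$ --- this is the ``diagonal'' term $c_{ij}\be_j\ot\al_i$ allowed for an intermediate vertex, with the single coefficient corresponding to the pair $(\al,\be)$ equal to $1$ and all others zero --- and set $\d(e_p)=0$ for every $p\neq b$. The values of $\d$ on the arrows are then forced by the Frobenius relations $\d(\ga)=(\ga\ot 1)\d(e_{t(\ga)})=\d(e_{s(\ga)})(1\ot\ga)$.

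Then I would verify that this assignment really is a nearly Frobenius coproduct, i.e. that $\d(xy)=\d(x)(1\ot y)=(x\ot 1)\d(y)$ for all basis elements $x,y$ (idempotents and arrows). The key simplification is that $\mathcal{A}$ is radical square zero, so every product of two arrows vanishes. Consequently each expression for $\d$ on an arrow $\ga$ contains a factor $\ga\be$ or $\al\ga$ (a length-two path), whence $\d(\ga)=0$ for all arrows; this also makes the two determinations of $\d(\al)$ and of $\d(\be)$ agree, so the assignment is globally consistent. All the commuting-diagram identities then reduce on basis elements to $0=0$, except for the idempotent condition at $b$, which holds because $s(\be)=b=t(\al)$ yields $(e_b\ot 1)\d(e_b)=\d(e_b)=\d(e_b)(1\ot e_b)$. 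Finally, since $\al$ and $\be$ are genuine arrows they are non-zero in $\mathcal{A}$, so $\d(e_b)=\be\ot\al\neq 0$; thus $\d$ is a non-trivial element of $\mathcal{E}$ and $\operatorname{Frobdim}\mathcal{A}\geq 1>0$.

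I expect the only delicate point to be the global consistency of the construction across the three vertices $a,b,c$ touched by $\al$ and $\be$: a priori the value of $\d$ on $\al$ computed from $\d(e_a)$ must match the one computed from $\d(e_b)$, and similarly for $\be$. However this is exactly where the radical square zero hypothesis does all the work, since the linking products $\al\be$, $\al\ga$ and $\ga\be$ are all length-two paths and therefore zero; so there is no real obstacle, and the verification becomes routine once the correct local term $\be\ot\al$ at the middle vertex of a length-two path has been singled out.
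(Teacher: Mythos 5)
Your proposal is correct and follows essentially the same route as the paper: both arguments reduce $\mathcal{I}\neq 0$ to the existence of a path of length two and then produce a non-trivial coproduct supported at its intermediate vertex, the paper by appealing to the intermediate-vertex case of Remark \ref{lemita} (obtaining $\operatorname{Frobdim}\mathcal{A}\geq m\cdot n\geq 1$), you by explicitly writing down and verifying the single coproduct $\d(e_b)=\be\ot\al$. Your version is simply a more detailed instance of the same construction, with the verification (which the paper delegates to the remark) carried out in full.
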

\begin{proof}
Since $\mathcal{I}\neq 0$ there is at least one path of length two. Let us call $p$ the intermediate vertex of that path, $m$ the indegree and $n$ the outdegree of $p$. Then the situation is similar to the last case of the previous remark and $\operatorname{Frobdim}\mathcal{A}\geq m.n\geq 1$.

\end{proof}






Finally we determine the Frobenius dimension of radical square zero algebras. For this reason we introduce the following useful notations for some special subspaces of $\Bbbk Q$ and $\Bbbk Q \otimes \Bbbk Q$: $V_p = \langle \{\alpha\}_{s(\alpha) = p, \ \alpha \in Q_1} \rangle $, $W_p = \langle \{\beta\}_{t(\beta) = p, \ \alpha \in Q_1} \rangle$, $\bar{V}_p = \langle V_p, e_p \rangle$, $\bar{W}_p = \langle W_p, e_p \rangle$, where $p \in Q_0$, and $U_{\alpha} = \langle \{\alpha \otimes e_p - e_q \otimes \alpha\}_{s(\alpha) = p, t(\alpha) = q } \rangle$ where $\alpha \in Q_1$. From Remark \ref{lemita} we can obtain the following result:

\begin{prop}\label{P15}
If $\mathcal{A}=\frac{\k Q}{\mathcal{I}}$ is a radical square zero algebra, then

\small{$$\operatorname{Frobdim}\mathcal{A} = \operatorname{dim}_\k \frac{\sum_{p \in Q_0} V_p \otimes \bar{W}_p + \bar{V}_p\otimes W_p}{ \sum_{p \in Q_0, gr^{+} (p) \geq 2} V_p \otimes \langle e_p \rangle + \sum_{p \in Q_0, gr^{-} (p) \geq 2 }  \langle e_p \rangle \otimes W_p + \sum_{\alpha \in Q_1} U_{\alpha} }$$}

\end{prop}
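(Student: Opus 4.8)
The plan is to realize the Frobenius space $\mathcal{E}$ as a subspace of $\Bbbk Q \otimes \Bbbk Q$ and identify it, as a vector space, with the quotient displayed in the statement. Since $\mathcal{A}$ has radical square zero, any coproduct $\d$ is completely determined by its values on the idempotents $e_p$ and the arrows, and Remark~\ref{lemita} already pins down, vertex by vertex, the general form of $\d(e_p)$ subject to the local compatibility constraints coming from the two commuting squares. The key structural observation I would exploit is that, because products of arrows vanish, the nearly Frobenius conditions decouple into conditions indexed by individual vertices and individual arrows; there is no interaction between distant parts of the quiver. This is what makes a clean dimension formula possible.

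\textbf{Key steps.}
First I would record that a nearly Frobenius $\d$ is equivalent to the data $\{\d(e_p)\}_{p \in Q_0}$, and that the idempotent conditions $\d(e_p)=(e_p \otimes 1)\d(e_p)=\d(e_p)(1\otimes e_p)$ force $\d(e_p)$ to lie in $\bar{V}_p \otimes \bar{W}_p$ for each $p$ (an element is supported on terms $x \otimes y$ with $x \in \{e_p\}\cup V_p$ and $y \in \{e_p\}\cup W_p$). So the ambient space of candidate data is $\bigoplus_{p} \bar{V}_p \otimes \bar{W}_p$. Next I would impose the remaining Frobenius axioms. Writing out $\d(\al)=(\al \otimes 1)\d(e_p)$ and $\d(\al)=\d(e_q)(1\otimes \al)$ for an arrow $\al: p \to q$ (again using radical square zero to kill higher terms) produces exactly the compatibility $\d(e_p)(1\otimes\al) = (\al\otimes 1)\d(e_q)$ type relations; tracking the coefficient of the $e_p\otimes e_p$ term and of the pure source/sink terms reproduces the vanishing conditions seen in Remark~\ref{lemita}. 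The cleanest way to package this is to define a linear evaluation map $\Phi$ from $\sum_{p} V_p \otimes \bar{W}_p + \bar{V}_p \otimes W_p$ into the space of admissible coproduct data and show that $\mathcal{E}$ is its image, with kernel generated precisely by the three families in the denominator: the terms $V_p \otimes \langle e_p\rangle$ when $gr^{+}(p)\geq 2$, the terms $\langle e_p\rangle \otimes W_p$ when $gr^{-}(p)\geq 2$, and the elements $\al\otimes e_p - e_q\otimes \al$ spanning $U_\al$. I would verify the correspondence case by case against Remark~\ref{lemita}: the $U_\al$ relations encode the ``glue'' identity $\d(\al)=(\al\otimes 1)\d(e_q)=\d(e_p)(1\otimes\al)$ forcing a single arrow to contribute one shared parameter rather than two independent ones, while the degree-$\geq 2$ quotients encode the vanishing $a_0=0$ and $b_i=0$ (resp.\ $a_i=0$) that Remark~\ref{lemita} derives whenever a vertex has at least two outgoing (resp.\ incoming) arrows.

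\textbf{Main obstacle.}
The routine part is the local bookkeeping; the delicate point is proving that the relations I list are \emph{exactly} the kernel of $\Phi$, i.e.\ that no further relations are hidden and that the listed families are not overlapping in a way that would double-count. Concretely, the hard step is checking that the numerator $\sum_p (V_p \otimes \bar{W}_p + \bar{V}_p \otimes W_p)$ surjects onto all admissible data and that an element maps to zero precisely when it lies in the denominator subspace. I expect this to reduce to a careful comparison: for each local configuration in Remark~\ref{lemita} (source with $gr^{+}=1$ or $\geq 2$, sink, and the intermediate cases with $m,n$ small or large, including loops), I would match the free parameters surviving in that remark against the dimension contributed locally by the quotient, and confirm the counts agree. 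The loop cases and the $m=1,n=1$ case deserve separate scrutiny, since there the $e_p\otimes \al$ and $\al\otimes e_p$ terms both survive and the $U_\al$ relation must be shown to identify them correctly without collapsing a genuine parameter. Once the image-and-kernel description of $\Phi$ is established, the dimension formula follows immediately by the rank–nullity theorem applied to $\Phi$.
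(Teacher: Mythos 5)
Your proposal is correct and follows essentially the same route as the paper: both reduce to the shape of $\d(e_p)$ forced by the idempotent identities, then extract from $\d(e_p)(1\otimes\al)=(\al\otimes 1)\d(e_q)$ for each arrow $\al:p\to q$ exactly the three families of constraints appearing in the denominator (vanishing of the $e_p\otimes e_p$ coefficient, vanishing of $V_p\otimes\langle e_p\rangle$ resp.\ $\langle e_p\rangle\otimes W_p$ at vertices of out-/in-degree at least $2$, and the gluing $c_\al=d_\al$ encoded by $U_\al$), and conclude by the case-by-case count of Remark \ref{lemita}. One small caveat: your map $\Phi$ is more naturally set up the other way around --- $\mathcal{E}$ is the subspace of the numerator annihilated by the coordinate functionals that correspond, under the path basis, to the denominator, so $\dim_\k\mathcal{E}=\dim_\k N-\dim_\k D$ follows directly, whereas a genuine surjection $N\to\mathcal{E}$ with kernel $D$ would require choosing a splitting (and, for the $U_\al$ part, dividing by $2$).
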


\begin{proof}
Suppose that $\d$ is a nearly Frobenius coproduct on $\mathcal{A}$. It is clear that $\d(e_p) \in V_p \otimes \bar{W}_p + \bar{V}_p\otimes W_p + \langle e_p\otimes e_p \rangle $.

\underline{Claim}: if $\d (e_p) = ae_p\otimes e_p + v+w$ with $v \in V_p \otimes \bar{W}_p$ and $w \in \bar{V}_p\otimes W_p$, then $a = 0$.

Let $\alpha$ be an arrow of $Q_1$ such that $s(\alpha) = p$ and $t(\alpha) = q$. Since $\d(e_p) = ae_p\otimes e_p + v + w$, then $\d(\alpha) = a(e_p \otimes \alpha) + v' = b\alpha \otimes e_q + w'$ with $v' \in V_p \otimes W_p$ and $w' \in  V_q \otimes W_q$. It is easy to see that $\alpha \otimes e_p \not\in V_q\otimes W_q + \langle e_q \otimes \alpha \rangle$, hence $a = 0$.

Then $\d (e_p)$ can be generated by vectors in $V_p \otimes \bar{W}_p + \bar{V}_p\otimes W_p$ for all $p \in Q_0$.

\underline{Claim}: if $ \gamma_0$ is an arrow of $Q_1$ such that $s(\gamma_0) = p$ and $t(\gamma_0) = q$,
$\d(e_p) = v + \sum c_{\alpha} \alpha \otimes e_p$ where $v \in \bar{V}_p \otimes W_p$ and $\d(e_q) = w + \sum d_{\beta} e_q \otimes \beta$ where $w \in V_q \otimes \bar{W}_q$ then $c_{\gamma_0} = d_{\gamma_0}$.

Using that $\d$ is a nearly Frobenius structure we have that $\d(\gamma_0) = (\gamma_0 \otimes 1) \d(e_q) = \d(e_p)(1\otimes \gamma_0)$.

\begin{itemize}
\item $(\gamma_0 \otimes 1) \d(e_q) = \sum d_{\beta} \gamma_0 \otimes \beta$

\item $\d(e_p)(1\otimes \gamma_0) = \sum c_{\alpha} \alpha \otimes \gamma_0$

\end{itemize}

On the other hand the set $\{\gamma_0 \otimes \beta \}_{\beta \in B} \cup \{ \alpha \otimes \gamma_0\}_{\alpha \in A}$ is linearly independents, then the equations agree only if $c_{\gamma_0} = d_{\gamma_0}$.

Finally, using Remark \ref{lemita} when the indegree or outdegree are greater or equal to two, it follows the thesis statement. \end{proof}


\section{String algebras}

The class of special biserial algebras was studied by Skowronski and Waschb\"{u}sch in \cite{SW} where they characterize the biserial algebras of finite representation type. The definition
of these algebras can be given in terms of conditions on the associated bound quiver $(Q, I)$. A classification of the special biserial algebras which are minimal representation-infinite has been given by Ringel in \cite{CMR1}. There is a beautiful description of all finite-dimensional indecomposable modules over special biserial algebras: they are either string modules or band modules or non-uniserial projective-injective modules, see \cite{BC}, \cite{WW}.

\begin{defn}
A bound quiver $(Q, I)$ is \textbf{special biserial} if it satisfies the following conditions:
\begin{enumerate}
  \item[(S1)] Each vertex in $Q$ is the source of at most two arrows and the target of at most two arrows.
  \item[(S2)] For an arrow $\alpha$ in $Q$ there is at most one arrow $\beta$ and at most one arrow $\gamma$  such that $\alpha\beta\not\in I$ and $\gamma\alpha\not\in I$.
\end{enumerate}

If the ideal I is generated by paths, the bound quiver $(Q, I)$ is \textbf{string}.\\
An algebra is called \textbf{special biserial} (or \textbf{string}) if it is isomorphic to $kQ/I$ with $(Q, I)$ a special biserial bound quiver (or a string bound quiver, respectively).\\
An algebra is called \textbf{string quadratic} if the ideal $I$ is generated by paths of length two.
\end{defn}

\begin{rem}\label{cases}
Note that every gentle algebra is a string algebra. In \cite{AGL15} all the nearly Frobenius structures for a gentle algebra ${\mathcal{A}}$ associated to $Q$ are determined, where $Q$ is a finite, connected
and acyclic quiver. The natural question is if we can generalize this result for the family of string algebras.
\end{rem}

The first step is to study the family of string quadratic algebras.\\
\begin{rem}\label{rem1}
If ${\mathcal{A}}$ is a string quadratic algebra that is not gentle then there is at least one vertex in one of the following local situation: 
$$1)\xymatrix{
1 \bullet\ar[rrdd]_\alpha&&&&\\
&\ar@/^1pc/@{--}[rrd]&&&\\
&&\bullet 3\ar[rr]^\gamma&&\bullet 4\\
&\ar@/_1pc/@{--}[rru]&&&\\
2 \bullet\ar[rruu]^\beta&&&&
}\quad \quad
2)\xymatrix{
&&&&\bullet 3\\
&&&&\\
1 \bullet\ar[rr]^\alpha& \ar@/^1pc/@{--}[rru]\ar@/_1pc/@{--}[rrd]&2 \bullet\ar[rrdd]^\gamma\ar[rruu]_\beta&&\\
&&&&\\
&&&&\bullet 4
}$$

$$3)\xymatrix{
1 \bullet\ar[rrdd]_\alpha&&&&\bullet 4\\
&\ar@/^1pc/@{--}[rrdd]\ar@/^2pc/@{--}[rr]&&&\\
&&3 \bullet\ar[rruu]_\gamma\ar[rrdd]^\delta&&\\
&\ar@/_1pc/@{--}[rruu]&&&\\
2 \bullet\ar[rruu]^\beta&&&&\bullet 5
}\quad \quad 4) \xymatrix{
1 \bullet\ar[rrdd]_\alpha&&&&\bullet 4\\
&\ar@/^1pc/@{--}[rrdd]\ar@/^2pc/@{--}[rr]&&&\\
&&3 \bullet\ar[rruu]_\gamma\ar[rrdd]^\delta&&\\
&\ar@/_1pc/@{--}[rruu]\ar@/_2pc/@{--}[rr]&&&\\
2 \bullet\ar[rruu]^\beta&&&&\bullet 5
}$$

$\bullet$ In the first case we have that:\\
$$\begin{array}{rcl}
  \Delta(e_1) & = & a_1e_1\otimes e_1+a_2\alpha\otimes e_1 \\
  \Delta(e_2) & = & b_1e_2\otimes e_2+b_2\beta\otimes e_2 \\
  \Delta(e_3) & = & c_1e_3\otimes e_3+c_2\gamma\otimes e_3+c_3e_3\otimes\alpha+c_4\gamma\otimes\alpha+c_5e_3\otimes\beta+c_6\gamma\otimes\beta \\
  \Delta(e_4) & = & d_1e_4\otimes e_4+d_2e_4\otimes \gamma
\end{array}$$

Let us evaluate on the arrows to obtain conditions about the coefficients above. 

For the arrow $\alpha$, 
$$\Delta(\alpha)=a_1e_1\otimes\alpha+a_2\alpha\otimes\alpha=c_1\alpha\otimes e_3+c_3\alpha\otimes\alpha+c_5\alpha\otimes\beta$$
so $a_1=c_1=c_5=0$ and $a_2=c_3$. For $\beta$ and $\gamma$,
$$\Delta(\beta)=b_1e_2\otimes\beta+b_2\beta\otimes\beta=a_2\beta\otimes\alpha$$
and
$$\Delta(\gamma)=d_1\gamma\otimes e_4+d_2\gamma\otimes\gamma=c_2\gamma\otimes\gamma;$$
from the first equation we conclude that $a_2=b_1=b_2=0$ and from the second one $d_1=0$ and $d_2=c_2$.

As a consecuence we obtain that
$$\Delta(e_1)=\Delta(e_2)=0$$
$$\Delta(e_3)=a\gamma\otimes e_3+b\gamma\otimes\alpha+c\gamma\otimes\beta$$
$$\Delta(e_4)=ae_4\otimes\gamma.$$

$\bullet$ Symmetrically, in the second case:
$$\Delta(e_3)=\Delta(e_4)=0$$
$$\Delta(e_2)=ae_2\otimes\alpha+b\beta\otimes\alpha+c\gamma\otimes\alpha$$
$$\Delta(e_1)=a\alpha\otimes e_1.$$

$\bullet$ In the third case the relations are $\alpha\delta=0$, $\alpha\gamma=0$ and $\beta\gamma=0$ and
$$\begin{array}{rcl}
  \Delta(e_1) & = & a_1e_1\otimes e_1+a_2\alpha\otimes e_1 \\
  \Delta(e_2) & =   & b_1e_2\otimes e_2+b_2\beta\otimes e_2+b_3\beta\delta\otimes e_2 \\
    \Delta(e_3) & = & c_1e_3\otimes e_3+c_2e_3\otimes \alpha+c_3e_3\otimes\beta+c_4\gamma\otimes e_3+c_5\delta\otimes e_3+c_6\gamma\otimes\beta+c_7\gamma\otimes\alpha+c_8\delta\otimes\beta+c_9\delta\otimes\alpha\\
  \Delta(e_4) & = & d_1e_4\otimes e_4+d_2e_4\otimes\gamma\\
  \Delta(e_5) & = & f_1e_5\otimes e_5+f_2e_5\otimes\delta+f_3e_5\otimes \beta\delta
\end{array}
$$
As before, we will evaluate on the arrows. First consider
$$\Delta(\alpha)=a_1e_1\otimes\alpha+a_2\alpha\otimes\alpha=c_1\alpha\otimes e_3+c_2\alpha\otimes\alpha +c_3\alpha\otimes\beta,$$
then $a_1=c_1=c_3=0$ and $a_2=c_2$. Secondly,
$$\Delta(\beta)=b_1e_2\otimes\beta+b_2\beta\otimes\beta+b_3\beta\delta\otimes\beta=a_2\beta\otimes\alpha+c_5\beta\delta\otimes e_3+c_8\beta\delta\otimes\beta+c_9 \beta\delta\otimes\alpha$$
therefore $a_2=b_1=b_2=c_5=c_9=0$ and $b_3=c_8$. Finally, for the last two arrows we obtain the following equations:
$$\Delta(\gamma)=d_1\gamma\otimes e_4+d_2\gamma\otimes\gamma=c_4\gamma\otimes\gamma$$
$$\Delta(\delta)=f_1\delta\otimes e_5+f_2\delta\otimes\delta+f_3\delta\otimes \beta\delta=c_4\gamma\otimes\delta+c_6\gamma\otimes\beta\delta+b_3\delta\otimes\beta\delta;$$

from the first equation we deduce that $d_1=0$ and $d_2=c_4$ and from the second one  $f_1=f_2=c_4=c_6=0$ and $f_3=b_3$.
In conclusion,
$$\Delta(e_1)=\Delta(e_4)=0$$
$$\Delta(e_2)=b_3\beta\delta\otimes e_2$$
$$\Delta(e_3)=c_7\gamma\otimes\alpha+b_3\delta\otimes\beta$$	
$$\Delta(e_5)=b_3e_5\otimes \beta\delta.$$

$\bullet$ In the last case we have that $\alpha\delta=0$, $\beta\gamma=0$, $\alpha\gamma=0$ and $\beta\delta=0$. As before,
$$\begin{array}{rcl}
  \Delta(e_1) & = & a_1e_1\otimes e_1+a_2\alpha\otimes e_1 \\
  \Delta(e_2) & =   & b_1e_2\otimes e_2+b_2\beta\otimes e_2+b_3\beta\delta\otimes e_2 \\
    \Delta(e_3) & = & c_1e_3\otimes e_3+c_2e_3\otimes \alpha+c_3e_3\otimes\beta+c_4\gamma\otimes e_3+c_5\delta\otimes e_3+c_6\gamma\otimes\beta+c_7\gamma\otimes\alpha+c_8\delta\otimes\beta+c_9\delta\otimes\alpha\\
  \Delta(e_4) & = & d_1e_4\otimes e_4+d_2e_4\otimes\gamma\\
  \Delta(e_5) & = & f_1e_5\otimes e_5+f_2e_5\otimes\delta+f_3e_5\otimes \beta\delta
\end{array}
$$

Evaluating again on the arrows we obtain the following equations:
$$\begin{array}{rcl}
  \Delta(\alpha) & = & a_1e_1\otimes\alpha+a_2\alpha\otimes\alpha=c_1\alpha\otimes e_3+c_2\alpha\otimes \alpha+c_3\alpha\otimes\beta\\
  \Delta(\beta) & =   & b_1e_2\otimes\beta+b_2\beta\otimes\beta=a_2\beta\otimes\alpha \\
    \Delta(\gamma) & = & d_1\gamma\otimes e_4+d_2\gamma\otimes\gamma=c_4\gamma\otimes \gamma+c_5\delta\otimes \gamma\\
  \Delta(\delta) & = & f_1\delta\otimes e_5+f_2\delta\otimes\delta=d_2\gamma\otimes\delta\\
  
\end{array}
$$

and analogously to the previous cases, we conclude that
$$\Delta(e_1)=\Delta(e_2)=\Delta(e_4)=\Delta(e_5)=0.$$
$$\Delta(e_3)=c_6\gamma\otimes\beta+c_7\gamma\otimes\alpha+c_8\delta\otimes\beta+c_9\delta\otimes\alpha.$$
\end{rem}

\begin{thm}\label{stringquadratic}
If  $\mathcal{A}=\frac{\k Q}{\mathcal{I}}$ is a string quadratic algebra but not a gentle algebra, then it has a non-trivial nearly Frobenius algebra structure.
\end{thm}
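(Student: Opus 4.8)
The plan is to upgrade the purely local computations of Remark~\ref{rem1} to a genuine coproduct defined on all of $\mathcal{A}$, and then to observe that it is non-trivial. The starting point is the standard observation that a nearly Frobenius coproduct is completely determined by its values on the idempotents: writing any basis path as $w = e_{s(w)}\,w\,e_{t(w)}$, the two defining diagrams force
$$\Delta(w) = \Delta(e_{s(w)})(1\otimes w) = (w\otimes 1)\Delta(e_{t(w)}).$$
Hence, to construct $\Delta$ it suffices to prescribe the family $\{\Delta(e_p)\}_{p\in Q_0}$ subject only to the single compatibility
$$\Delta(e_{s(\alpha)})(1\otimes\alpha) = (\alpha\otimes 1)\Delta(e_{t(\alpha)})\qquad(\alpha\in Q_1).$$
A short induction on the length of a path (multiply the identity for $\alpha$ on the right by $1\otimes\beta$, use associativity of $m$, and invoke the identity for $\beta$) shows that these arrow identities automatically yield the analogous identities for every path, so that both Frobenius diagrams hold on arbitrary products. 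Thus the whole problem reduces to exhibiting one nonzero solution of the arrow identities.

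Next I would make the choice explicit. Since $\mathcal{A}$ is string quadratic but not gentle, Remark~\ref{rem1} places a vertex $c$ of $Q$ in one of the four displayed configurations (the central vertex being $3$ in cases 1, 3, 4 and $2$ in case 2). At $c$ I would select an outgoing arrow $\eta$ and an incoming arrow $\xi$ with the property that $\xi'\eta\in\mathcal{I}$ for \emph{every} incoming arrow $\xi'$ at $c$ and $\xi\eta'\in\mathcal{I}$ for \emph{every} outgoing arrow $\eta'$ at $c$, and then set
$$\Delta(e_c) = \eta\otimes\xi,\qquad \Delta(e_p)=0\ \ (p\neq c).$$
The existence of such a compatible pair $(\eta,\xi)$ is exactly what the relation patterns of the four cases provide: in case 1 one takes $\eta=\gamma$ and $\xi\in\{\alpha,\beta\}$ using $\alpha\gamma,\beta\gamma\in\mathcal{I}$; in case 2 one takes $\xi=\alpha$ and $\eta\in\{\beta,\gamma\}$ using $\alpha\beta,\alpha\gamma\in\mathcal{I}$; in case 3 the forced choice $\eta=\gamma,\ \xi=\alpha$ using $\alpha\gamma,\alpha\delta,\beta\gamma\in\mathcal{I}$; and in case 4 any of the four pairs works since all four compositions lie in $\mathcal{I}$. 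This matches the nonzero coproducts already written down in Remark~\ref{rem1}, with the ``linking'' parameters set to zero so that only the single vertex $c$ carries a nonzero value.

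It then remains to verify the arrow identities. For an arrow $\zeta$ with neither endpoint equal to $c$ both sides vanish because $\Delta(e_{s(\zeta)})=\Delta(e_{t(\zeta)})=0$. For an incoming arrow $\zeta$ at $c$ the left side is $0$ while the right side is $(\zeta\otimes 1)(\eta\otimes\xi)=\zeta\eta\otimes\xi=0$ by condition (i); dually, for an outgoing arrow $\theta$ at $c$ the right side is $0$ while the left side is $(\eta\otimes\xi)(1\otimes\theta)=\eta\otimes\xi\theta=0$ by condition (ii). So extension by $0$ away from $c$ is consistent, and since $\Delta(e_c)=\eta\otimes\xi\neq 0$ the map $\Delta$ is a non-trivial nearly Frobenius coproduct. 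The main obstacle is precisely this global consistency rather than the local calculation: one must ensure that the chosen arrows interact through relations with \emph{all} arrows incident to $c$, not merely with the two drawn in each picture, so that no surviving length-two path forces a cancelation against the rest of the quiver. This is where the non-gentleness hypothesis does the essential work, since the failure of the gentle condition on relations is exactly what guarantees an outgoing arrow whose pre-compositions by all incoming arrows vanish (and dually); confirming that each of the four configurations of Remark~\ref{rem1} admits such a compatible pair $(\eta,\xi)$ completes the proof.
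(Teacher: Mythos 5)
Your proposal is correct and follows essentially the same route as the paper: in each of the four non-gentle configurations of Remark~\ref{rem1} you support $\Delta$ at the single central vertex, set $\Delta(e_c)=\eta\otimes\xi$ with $\eta$ outgoing and $\xi$ incoming so that all relevant compositions lie in $\mathcal{I}$, extend by zero, and check the arrow compatibilities --- exactly the coproducts $\gamma\otimes\alpha$, $\gamma\otimes\beta$, etc.\ used in the paper's proof. Your only additions are the explicit reduction of the two Frobenius diagrams to the arrow identities and the remark that the chosen pair must be compatible with \emph{all} arrows incident to $c$ (not just those drawn), both of which are sound and, if anything, make the argument slightly more careful than the published one.
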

\begin{proof}
We will construct a non-zero nearly Frobenius coproduct for this type of algebras. Note that there is at least one vertex $p\in Q_{0}$ in one of the four possible cases of Remark \ref{rem1}. Let us describe the coproduct in each of them.

Suppose that we are locally in the first case. Analogously to the Remark \ref{rem1} we can define a coproduct
$$\Delta:{\mathcal{A}}\rightarrow {\mathcal{A}}\otimes {\mathcal{A}}$$ such that $\Delta(e_{q})=b\gamma\otimes \alpha+c\gamma\otimes\beta$, where $b,c\in\k$ and $\alpha, \beta, \gamma$ are as in Remark  \ref{rem1} and  $\Delta(e_i)=0$ for any other vertex $i\in Q_{0}$.

To check that $\Delta$ is well defined we need to verifiy that  $\Delta(\gamma)=\Delta(\alpha)=\Delta(\beta)=0$. 

For the first arrow $\gamma$, $\Delta(\gamma)=\Delta(q)(1\otimes\gamma)=(b\gamma\otimes \alpha+c\gamma\otimes\beta)(1\otimes \gamma)=b\gamma\otimes\alpha\gamma+c\gamma\otimes\beta\gamma=0$.

In the case of $\alpha$ and $\beta$ we have that
$\Delta(\alpha)=(\alpha\otimes 1)(b\gamma\otimes \alpha+c\gamma\otimes\beta)=b\alpha\gamma\otimes \alpha+c\alpha\gamma\otimes\beta=0$ and $\Delta(\beta)=(\beta\otimes 1)(b\gamma\otimes \alpha+c\gamma\otimes\beta)=b\beta\gamma\otimes \alpha+c\beta\gamma\otimes\beta=0$.

This proves that $\Delta$ is a nearly Frobenius coproduct.

In the second case we can proceed in the same way as before and prove that $\Delta:{\mathcal{A}}\rightarrow {\mathcal{A}}\otimes {\mathcal{A}}$ defined as  $\Delta(e_{q})=b\beta\otimes\alpha+c\gamma\otimes\alpha$ and $\Delta(e_i)=0$ otherwise is a nearly Frobenius coproduct.

In the third local possibility let us define $\Delta$ as follows, $\Delta(e_{q})=c\gamma\otimes\alpha$, for $c\in\k$ and $\Delta(e_i)=0$ otherwise. It is straightforward to verifiy that $\Delta$ is well defined and a nearly Frobenius coproduct.

Finally, in the last case we define $\Delta(e_{q})=a\gamma\otimes\beta+b\gamma\otimes\alpha+c\delta\otimes\beta+d\delta\otimes\alpha$, with $a, b, c, d\in \k$ and $\Delta(e_i)=0$ for any other vertex $i\in Q_{0}$.

\end{proof}

Now, we study the general case, that is, ${\mathcal{A}}$ is a string algebra.

\begin{thm}
Let ${\mathcal{A}}$ be a string algebra, not quadratic. Then, ${\mathcal{A}}$ admits at least one non-trivial nearly Frobenius coproduct.
\end{thm}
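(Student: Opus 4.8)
My plan is to exploit a minimal monomial relation of length at least three to write down one explicit nonzero coproduct, concentrated on the vertices that the relation passes through. Since $\mathcal{A}=\k Q/\mathcal{I}$ is a string algebra that is \emph{not} quadratic, the monomial ideal $\mathcal{I}$ has a minimal generator that is a path $\rho=\alpha_1\alpha_2\cdots\alpha_n$ of length $n\geq 3$, say $v_0\xrightarrow{\alpha_1}v_1\to\cdots\xrightarrow{\alpha_n}v_n$. Minimality means no proper subpath of $\rho$ lies in $\mathcal{I}$; hence every proper subpath $\alpha_k\cdots\alpha_l$ is a nonzero element of $\mathcal{A}$, while $\rho=0$. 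I would then define $\Delta$ on idempotents by the telescoping prescription
$$\Delta(e_p)=\sum_{\substack{0\leq i\leq n\\ v_i=p}}(\alpha_{i+1}\cdots\alpha_n)\otimes(\alpha_1\cdots\alpha_i),\qquad p\in Q_0,$$
where the empty product is the idempotent, and extend it to paths by $\Delta(x)=(x\otimes 1)\Delta(e_{t(x)})=\Delta(e_{s(x)})(1\otimes x)$. Observe that the summands with $i=0$ and $i=n$ carry $\rho$ as a tensor factor and therefore vanish, so only the intermediate splittings survive.

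The next step is the reduction that makes this manageable. As in the computations underlying Remark \ref{lemita}, a nearly Frobenius coproduct is completely determined by its values on the idempotents, and the two Frobenius identities $\Delta(ab)=\Delta(a)(1\otimes b)=(a\otimes 1)\Delta(b)$ hold for all $a,b$ as soon as the single compatibility
$$\Delta(e_p)(1\otimes\gamma)=(\gamma\otimes 1)\Delta(e_q)$$
is verified for every arrow $\gamma\colon p\to q$: an induction on the length of paths propagates the identity to arbitrary products, and products of non-composable paths vanish on both sides. So I only need to check this one identity per arrow.

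For the verification, fix an arrow $\gamma\colon p\to q$. On the left-hand side a summand $(\alpha_{i+1}\cdots\alpha_n)\otimes(\alpha_1\cdots\alpha_i)$ with $v_i=p$ survives multiplication by $1\otimes\gamma$ only if $(\alpha_1\cdots\alpha_i)\gamma\neq 0$, which forces $\alpha_i\gamma\notin\mathcal{I}$; by the special biserial axiom (S2) the only arrow following $\alpha_i$ outside $\mathcal{I}$ is $\alpha_{i+1}$, so $\gamma=\alpha_{i+1}$. Symmetrically, a summand of $(\gamma\otimes 1)\Delta(e_q)$ survives only if $\gamma\alpha_{j+1}\notin\mathcal{I}$, and (S2) then forces $\gamma=\alpha_j$. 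Reindexing $j=i+1$ matches the surviving terms, while the endpoint contributions carrying a factor $\rho$ drop out, so the two sides coincide termwise. I expect the main obstacle to be purely combinatorial: guaranteeing that the prescription is consistent at a vertex that $\rho$ visits more than once, as happens for cyclic relations such as $\rho=\alpha^{d}$ on $\k Z_1/J^{d}$. This is exactly what the grouped summation over occurrences $\{i:v_i=p\}$ is designed to handle, and the termwise matching above is robust under such repetitions; that bookkeeping is the only delicate point, with (S2) and the minimality of $\rho$ doing the real work of killing every off-relation and endpoint term.

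Finally, for non-triviality I would isolate the summand for $i=1$ at the vertex $v_1$, namely $(\alpha_2\cdots\alpha_n)\otimes\alpha_1$, whose two factors are nonzero proper subpaths of $\rho$ (here $n\geq 3$ is used). Since distinct splittings $(\alpha_{i+1}\cdots\alpha_n)\otimes(\alpha_1\cdots\alpha_i)$ have distinct bidegrees $(n-i,i)$, they cannot cancel one another; hence $\Delta(e_{v_1})\neq 0$ and $\Delta$ is a non-trivial nearly Frobenius coproduct, as required.
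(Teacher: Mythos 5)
Your construction is precisely the one in the paper's proof: pick a minimal monomial relation $\rho=\alpha_1\cdots\alpha_n$ of length $n\geq 3$ and define the telescoping coproduct $\Delta(e_{v_i})=\alpha_{i+1}\cdots\alpha_n\otimes\alpha_1\cdots\alpha_i$ on the intermediate vertices of $\rho$ and zero elsewhere, with non-triviality read off from the surviving splitting $(\alpha_2\cdots\alpha_n)\otimes\alpha_1$. Your verification is organized a bit more systematically (reducing everything to the single arrow-level identity via (S2), and summing over repeated occurrences of a vertex so that cyclic relations are covered), but it is essentially the same argument as the paper's path-by-path check.
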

\begin{proof}
Since ${\mathcal{A}}$ is not gentle there are extra monomial relations. If one of the extra relations is of lenght $2$ we are locally in one of the four cases of Remark \ref{rem1} and we conclude that ${\mathcal{A}}$ is nearly Frobenius.

If all the extra relations are of lenght greater that 2 choose one relation named $r=\alpha_{1}\cdots\alpha_{s}$. Locally, we are in the following situation:

$$\scalebox{0.9}{\xymatrix{
\underset{0}{\bullet}\ar[r]^{\al_1}\ar@/^2pc/@{--}[rrr]&\underset{1}{\bullet}\ar@{..}[r]&\underset{s-1}{\bullet}\ar[r]^{\al_{s}}&\underset{s}{\bullet}
}}$$
where from every  vertex arrows might come in or out in the string scheme.\\

\emph{Claim} $\Delta{(e_{i})}=\alpha_{i+1}\cdots\alpha_{s}\otimes \alpha_{1}\cdots \alpha_{i}$ if $1\leq i\leq s-1$ and $\Delta(e_{j})=0$ otherwise is a coproduct in $A$:

If $1\leq i<j\leq s-1$,

 $\Delta(\alpha_{i+1}\cdots \alpha_j)=\Delta(e_{i})\alpha_{i+1}\cdots \alpha_j=\alpha_{i+1}\cdots \alpha_j \Delta(e_{j})=\alpha_{i+1}\cdots \alpha_{s}\otimes \alpha_{1}\cdots \alpha_{j}.$

On the other hand,

 $\Delta(\alpha_{1}\cdots \alpha_{i})= \Delta(e_{0})\alpha_{1}\cdots \alpha_{i}=\alpha_{1}\cdots \alpha_{i}\Delta(e_{i})=0$ and analogously, $\Delta(\alpha_{i+1}\cdots \alpha_{s})=0$.

Suppose there is a path $w$ from a vertex $p$ not involved in the relation to an intermediate vertex $1\leq i\leq s-1$. We can consider two different cases.

If $w$ contains $\alpha_{1}\cdots \alpha_{i} $ the reasoning  is similar to $\Delta(\alpha_{1}\cdots \alpha_{i})$. If not, there is an arrow $\beta_{j}$ such that $t(\beta_{j})=j$ is an intermediate vertex and since $A$ is string $\beta_{j}\alpha_{j+1}=0$.
Then we conclude that $\Delta(w)=0$. The case were there is a path from an intermediate vertex to another vetex not involved in $r$ is analogous and the result holds.

\end{proof}


\section{Nearly Frobenius structures on toupie algebras}
In this section we first prove that canonical algebras only admit the trivial nearly Frobenius algebra estructure. Then we characterize when a toupie algebra has a non-trivial nearly Frobenius structure.

\begin{defn}
Canonical algebras were introduced in \cite{RCM}. Let $\k$ be a field, $\mathbf{n}=\bigl(n_1,\cdots, n_t\bigr)$ be a sequence of $t\geq 2$ positive integers (\emph{weights}), and $\boldsymbol{\lambda}=\bigl(\lambda_3,\dots ,\lambda_t\bigr)$ be a sequence of pairwise distinct elements of $\k^\times$. A \textbf{canonical algebra} of type $(\mathbf{p}, \boldsymbol{\lambda})$ is an algebra $\displaystyle{\Lambda(\mathbf{p}, \boldsymbol{\lambda})= \frac{\k Q}{\mathcal{I}}}$ where $Q$ is
$$\xymatrix{
& a_1^{(1)}\ar[r]^{\al_2^{(1)}}&a_2^{(1)}\ar@{.>}[rr]&&a_{n_1-1}^{(1)}\ar[rd]^{\al_{n_1}^{(1)}}&\\
0\ar[ru]^{\al_1^{(1)}}\ar[ddr]_{\al_1^{(t)}}\ar@{.>}[rd]\ar[r]^{\al_1^{(2)}}& a_1^{(2)}\ar[r]^{\al_2^{(2)}}& a_2^{(2)}\ar@{.>}[rr]&&a_{n_2-1}^{(2)}\ar[r]^{\al_{n_2}^{(2)}} &\omega\\
&\bullet\ar@{.>}[rrr]&&&\bullet\ar@{.>}[ru]&\\
&a_1^{(t)}\ar[r]_{\al_2^{(t)}}& a_2^{(t)}\ar@{.>}[rr]&&a_{n_t-1}^{(t)}\ar[ruu]_{\al_{n_t}^{(t)}}&
}$$
$\al^{(i)}=\al_1^{(i)}\al_2^{(i)}\dots \al_{n_{i}}^{(i)}$,
and $\mathcal{I}$ is the ideal in the path algebra $\k Q$ generated by the following linear combinations of paths from $0$ to $\omega$:
$$\mathcal{I}=\langle\alpha^{(1)}-\lambda_{i}\alpha^{(2)}-\alpha^{(i)}: i=3, \cdots ,t \rangle.$$
\end{defn}

\begin{thm}\label{canonicas}
Let $\mathcal{A}$ be a canonical algebra over a field $\k$, then $\operatorname{Frobdim}\mathcal{A}=0$.
\end{thm}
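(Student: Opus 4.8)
The plan is to show that a canonical algebra admits no nonzero nearly Frobenius coproduct, exploiting that $0$ is a global source and $\omega$ a global sink and that the defining ideal $\mathcal{I}$ only relates the longest paths. First I would record the two identities equivalent to the commuting squares, namely $\Delta(ab)=(\Delta a)(1\otimes b)=(a\otimes 1)(\Delta b)$, together with the elementary fact $\Delta(e_p)\in e_p\mathcal{A}\otimes\mathcal{A}e_p$. Since $0$ is a source we have $\mathcal{A}e_0=\langle e_0\rangle$, so $\Delta(e_0)=\xi\otimes e_0$ for some $\xi\in e_0\mathcal{A}$, and dually $\Delta(e_\omega)=e_\omega\otimes\eta$ for some $\eta\in\mathcal{A}e_\omega$. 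Multiplying on the right by any path $\pi$ with source $0$ then gives $\Delta(\pi)=\Delta(e_0)(1\otimes\pi)=\xi\otimes\pi$, and symmetrically $\Delta(\rho)=\rho\otimes\eta$ for every path $\rho$ with target $\omega$.

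The crux is to apply both identities to the full paths $\alpha^{(i)}$ from $0$ to $\omega$. Each $\alpha^{(i)}$ starts at $0$ and ends at $\omega$, so $\xi\otimes\alpha^{(i)}=\Delta(\alpha^{(i)})=\alpha^{(i)}\otimes\eta$ for every $i$. Projecting this identity onto the Peirce components $e_u\mathcal{A}e_w\otimes e_{u'}\mathcal{A}e_{w'}$ and using $\alpha^{(i)}\in e_0\mathcal{A}e_\omega$ forces $\xi,\eta\in e_0\mathcal{A}e_\omega=\langle\alpha^{(1)},\alpha^{(2)}\rangle$, a two-dimensional space. Inside this space $\xi\otimes\alpha^{(i)}=\alpha^{(i)}\otimes\eta$ is an equality of rank-one tensors, so if $\xi\neq 0$ then $\xi$ must be parallel to $\alpha^{(i)}$ for every $i$. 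But the full paths are pairwise linearly independent: with $\alpha^{(1)},\alpha^{(2)}$ as a basis the coordinate vectors are $(1,0)$, $(0,1)$ and $(1,-\lambda_i)$, which are pairwise non-proportional because the $\lambda_i$ are distinct and nonzero. With $t\ge 2$ this is impossible, so $\xi=\eta=0$.

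From $\xi=\eta=0$ it follows at once that $\Delta$ vanishes on every path meeting $0$ or $\omega$, in particular $\Delta(e_0)=\Delta(e_\omega)=0$. I expect the main obstacle to be the remaining interior bookkeeping. For an interior vertex $v=a_j^{(i)}$ I would factor the full path through $v$ as $\alpha^{(i)}=h\tau$, where $h$ is the head $0\to v$; then $0=\Delta(h)=(h\otimes 1)\Delta(e_v)$. Since left multiplication by $h$ is injective on $e_v\mathcal{A}$ — the paths $hp$, as $p$ runs over the tails starting at $v$, have distinct lengths and none is killed by $\mathcal{I}$ — this gives $\Delta(e_v)=0$. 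Hence $\Delta$ vanishes on all idempotents, and for an arbitrary basis path $\sigma$ with source $u$ we conclude $\Delta(\sigma)=\Delta(e_u)(1\otimes\sigma)=0$. Therefore $\Delta=0$ and $\operatorname{Frobdim}\mathcal{A}=0$.

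The point requiring care throughout is that a single monomial path is never zero in $\mathcal{A}$: the ideal $\mathcal{I}$ is spanned by the combinations $\alpha^{(1)}-\lambda_i\alpha^{(2)}-\alpha^{(i)}$ and contains no monomial, so proper paths remain linearly independent and the full paths span exactly the two-dimensional socle $e_0\mathcal{A}e_\omega$. This is precisely what validates both the independence argument killing $\xi,\eta$ and the injectivity argument killing $\Delta(e_v)$, so I would state it explicitly at the outset.
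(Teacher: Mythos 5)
Your proof is correct and follows essentially the same route as the paper: reduce $\Delta(e_0)$ and $\Delta(e_\omega)$ to elements of $e_0\mathcal{A}\otimes\langle e_0\rangle$ and $\langle e_\omega\rangle\otimes\mathcal{A}e_\omega$, evaluate $\Delta$ on the full branch paths, and use that $\alpha^{(1)}$ and $\alpha^{(2)}$ are linearly independent in the two-dimensional space $e_0\mathcal{A}e_\omega$ to force everything to vanish. Your rank-one-tensor phrasing is equivalent to the paper's coefficient comparison, and your explicit injectivity argument for the interior vertices fills in a step the paper only asserts with ``this implies $\Delta\equiv 0$.''
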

\begin{proof}
Since $\mathcal{A}$ is a canonical algebra then $\displaystyle{\mathcal{A}=\frac{\k Q}{\mathcal{I}}}$ with $Q$ as in the previous figure, and $$\mathcal{I}=\langle\alpha^{(1)}-\lambda_{i}\alpha^{(2)}-\alpha^{(i)}: i=3, \cdots ,t \rangle.$$
If we have a coproduct $\Delta:\mathcal{A}\rt\mathcal{A}\ot\mathcal{A}$ the next condition is required $$\Delta(e_{i})=\Delta(e_{i})(1\otimes e_{i})=(e_{i}\otimes 1)\Delta(e_{i}), \quad\mbox{for}\, i=1,\dots, t.$$
This implies that the coproduct in $e_{0}$ is
$$\Delta(e_{0})= a_{0}e_{0}\otimes e_{0}+\sum_{j=1}^{t}\sum_{i=1}^{n_{j}} a_{i}^{j}\alpha^{(j)}_{1}\cdots \alpha^{(j)}_{i}\otimes e_{0}$$
and the coproduct in $e_{\omega}$ is
$$\Delta(e_{\omega})= b_{0}e_{\omega}\otimes e_{\omega}+\sum_{j=1}^{t}\sum_{i=1}^{n_{j}} b_{i}^{j}e_{\omega}\otimes \alpha^{(j)}_{i}\cdots \alpha^{(j)}_{n_{j}}.$$
The coproduct in $\alpha^{(1)}$ is given by
$$\Delta\bigl(\alpha^{(1)}\bigr)=\Delta(e_{0})\bigl(1\otimes \alpha^{(1)}\bigr)=\bigl(\alpha^{(1)}\otimes 1\bigr)\Delta(e_{\omega}).$$
Then
$$a_0e_0\otimes \al^{(1)}+\sum_{j=1}^{t}\sum_{i=1}^{n_{j}} a_{i}^{j}\alpha^{(j)}_{1}\cdots \alpha^{(j)}_{i}\otimes \alpha^{(1)}=b_{0}\al^{(1)}\otimes e_{\omega}+\sum_{j=1}^{t}\sum_{i=1}^{n_{j}} b_{i}^{j}\al^{(1)}\otimes \alpha^{(j)}_{i}\cdots \alpha^{(j)}_{n_{j}}$$
By comparison we deduce that $a_0=b_0=0$ and
$$\Delta\bigl(\alpha^{(1)}\bigr)=\sum_{i=1}^ta^{i}_{n_{i}}\al^{(i)}\ot\al^{(1)}=\sum_{j=1}^tb^{j}_1\al^{(1)}\otimes \al^{(j)}.$$
Finally, if we replace $\al^{(i)}$ by $\al^{(1)}-\lambda_i\al^{(2)}$ for $i\geq 3$ we get that
$$\Delta(e_0)=a\al^{(1)}\ot e_0,\quad \Delta(e_\omega)=ae_\omega\otimes \al^{(1)}.$$
As before
$$\Delta\bigl(\alpha^{(2)}\bigr)=\Delta(e_{0})\bigl(1\otimes \alpha^{(2)}\bigr)=\bigl(\alpha^{(2)}\otimes 1\bigr)\Delta(e_{\omega}).$$
Then, $\Delta\bigl(\alpha^{(2)}\bigr)=a\al^{(1)}\ot\al^{(2)}=a\al^{(2)}\ot\al^{(1)}$. Therefore $\Delta(e_0)=\Delta(e_\omega)=0$. This implies that $\Delta\equiv 0.$
\end{proof}

\begin{defn}
A quiver $Q$ is called \textbf{toupie} if it has a unique source $0$ and a unique sink $\omega$, and, for any other vertex $x$ there is exactly one arrow having
$x$ as source and exactly one arrow having $x$ as target:
$$\xymatrix{
&& 0\ar[lld]\ar[ld]\ar[rd]\ar@/^6pc/[dddd]\ar@/^9pc/[dddd] &\\
\bullet\ar[d]&\bullet\ar[d] &&\bullet\ar[d]\\
\vdots\ar[d]&\vdots\ar[d] && \vdots\ar[d]\\
\bullet\ar[rrd]&\bullet \ar[rd]&& \bullet\ar[ld]\\
&&\omega &
}$$

We say that $A$ is a \textbf{toupie algebra} if $\displaystyle{\mathcal{A}=\frac{\k Q}{\mathcal{I}}}$ with $Q$ a toupie quiver, and $\mathcal{I}$ any admissible ideal.
\end{defn}

\begin{prop}
If $Q$ is a commutative diamond, then $\displaystyle{\mathcal{A}=\frac{\k Q}{\mathcal{I}}}$ has $\operatorname{Frobdim}\mathcal{A}=1$. Moreover, the only nearly Frobenius structure   over $\mathcal{A}$ is the linear structure in each branch.
\end{prop}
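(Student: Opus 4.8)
The plan is to mimic the coefficient-bookkeeping method already used for the canonical algebras in Theorem~\ref{canonicas} and for the radical square zero case in Proposition~\ref{P15}, adapted to the four vertices of the commutative diamond. First I would fix notation: label the vertices $0$ (source), $1$ and $2$ (the two intermediate vertices) and $\omega$ (sink), write $\alpha_1\colon 0\to 1$, $\alpha_2\colon 0\to 2$, $\beta_1\colon 1\to\omega$, $\beta_2\colon 2\to\omega$, and set $\rho:=\alpha_1\beta_1=\alpha_2\beta_2$, the class of the single commutativity relation. A $\k$-basis of $\mathcal{A}$ is then $\{e_0,e_1,e_2,e_\omega,\alpha_1,\alpha_2,\beta_1,\beta_2,\rho\}$, so $\dim_\k\mathcal{A}=9$.

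Next I would write down the most general admissible $\Delta(e_p)$ at each vertex using the idempotent constraints $\Delta(e_p)=(e_p\otimes 1)\Delta(e_p)=\Delta(e_p)(1\otimes e_p)$, which force the left tensor factor of every summand to be a path out of $p$ and the right factor to be a path into $p$. Because $0$ is a source and $\omega$ a sink this is very restrictive: the right factor of $\Delta(e_0)$ is forced to be $e_0$ and the left factor of $\Delta(e_\omega)$ is forced to be $e_\omega$, while $\Delta(e_1)$ and $\Delta(e_2)$ each range over four monomials. This produces sixteen unknown scalars in total.

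The core step is to impose the nearly Frobenius compatibility on the arrows in the form $\Delta(\gamma)=(\gamma\otimes 1)\Delta(e_{t(\gamma)})=\Delta(e_{s(\gamma)})(1\otimes\gamma)$. I would evaluate first on $\alpha_1$ and $\alpha_2$: comparing coefficients of the linearly independent tensor monomials (exactly the independence argument of Proposition~\ref{P15}) kills the diagonal and most off-diagonal terms at $0,1,2$ and ties the survivors together, leaving a single scalar $a$ with $\Delta(e_0)=a\,\rho\otimes e_0$, $\Delta(e_1)=a\,\beta_1\otimes\alpha_1$ and $\Delta(e_2)=a\,\beta_2\otimes\alpha_2$. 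Evaluating then on $\beta_1$ and $\beta_2$ propagates the same scalar to the sink, forcing $\Delta(e_\omega)=a\,e_\omega\otimes\rho$ and removing all remaining freedom. To finish I would check that these assignments are genuinely consistent on the length-two element $\rho$, i.e. that computing $\Delta(\rho)$ through $\alpha_1\beta_1$ and through $\alpha_2\beta_2$ both yield $a\,\rho\otimes\rho$; this shows $\Delta$ is well defined for every $a\in\k$. Hence the Frobenius space is one-dimensional and $\operatorname{Frobdim}\mathcal{A}=1$; normalizing $a=1$ exhibits the generator as the linear (path-splitting) coproduct of Section~3 on each branch $0\to i\to\omega$, the two branches agreeing precisely because $\alpha_1\beta_1=\alpha_2\beta_2$.

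I do not expect a conceptual obstacle here: the whole argument is a finite linear system. The only delicate point is careful bookkeeping, namely systematically using the relation $\rho=\alpha_1\beta_1=\alpha_2\beta_2$ when simplifying products such as $(\alpha_1\otimes 1)(\beta_1\otimes e_1)$, and making sure that the tensor monomials whose coefficients I compare really are linearly independent in $\mathcal{A}\otimes\mathcal{A}$ so that the coefficient matching is legitimate.
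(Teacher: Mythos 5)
Your proposal is correct in substance and follows essentially the same strategy as the paper: write the general idempotent-constrained form of $\Delta(e_p)$ at each vertex, impose $\Delta(\gamma)=(\ \gamma\otimes 1)\Delta(e_{t(\gamma)})=\Delta(e_{s(\gamma)})(1\otimes\gamma)$, compare coefficients of linearly independent tensor monomials using the relation between the two branches, and collapse everything to one scalar $a$. The one caveat is that the paper's commutative diamond has two branches of arbitrary lengths $n$ and $m$ (note the $\vdots$ in the displayed quiver and the relation $\alpha_1\cdots\alpha_n=\beta_1\cdots\beta_m$), whereas you have fixed both branches to have length two; your bookkeeping covers only that minimal case. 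The gap is harmless because the same argument extends verbatim: $\Delta(e_0)$ then ranges over all initial subpaths $\alpha_1\cdots\alpha_i\otimes e_0$ and $\beta_1\cdots\beta_j\otimes e_0$ (dually at $\omega$), the paper compares coefficients after multiplying by the full branch $\alpha_1\cdots\alpha_n$ rather than arrow by arrow, and the intermediate vertices are then forced to carry $\Delta(e_i)=a\,\alpha_i\cdots\alpha_n\otimes\alpha_1\cdots\alpha_{i-1}$, which is the claimed linear structure on each branch. You should state that extension (or at least set up the general $n,m$ notation) for the proof to match the proposition as stated.
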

\begin{proof}
The quiver $Q$ associated to $\mathcal{A}$ is of the form:

\begin{center}
$\xymatrix{
& 0\ar[ld]_{\beta_1}\ar[rd]^{\al_1} &\\
\bullet\ar[d]_{\beta_2} &&\bullet\ar[d]^{\al_2}\\
\vdots\ar[d] && \vdots\ar[d]\\
\bullet \ar[rd]_{\beta_{m}}&& \bullet\ar[ld]^{\al_n}\\
&\omega &
}$
\end{center}

and $\mathcal{I}=\langle \al_1\cdots\al_n-\beta_1\cdots\beta_m\rangle$. This means that $\al_1\cdots\al_n=\beta_1\cdots\beta_m$ in $\mathcal{A}$.  \\
If $\Delta:\mathcal{A}\rightarrow\mathcal{A}\otimes\mathcal{A}$ is a nearly Frobenius coproduct then
$$
\Delta(e_0)=(e_0\otimes 1)\Delta(e_0)=\Delta(e_0)(1\otimes e_0)=a_0e_0\ot e_0+\sum_{i=1}^na_i\al_1\dots\al_i\ot e_0+\sum_{j=1}^mb_j\beta_1\dots\beta_j\ot e_0$$
and
$$\begin{array}{ccl}
    \Delta(e_\omega)=(e_\omega\otimes 1)\Delta(e_\omega)=\Delta(e_\omega)(1\otimes e_\omega) & = &\displaystyle{ c_0e_\omega\ot e_\omega+\sum_{i=1}^nc_ie_\omega\otimes\al_i\dots\al_n} \\
     & + &\displaystyle{ \sum_{j=1}^md_je_\omega\otimes \beta_j\dots\beta_m.}
  \end{array}
$$
If we use that $\Delta(\al_1\dots\al_n)=\Delta(e_0)(1\otimes\al_1\dots\al_n)=(\al_1\dots\al_n\otimes 1)\Delta(e_\omega)$, and $\al_1\cdots\al_n=\beta_1\cdots\beta_m$ we deduce that
$$\Delta(e_0)=a\al_1\dots\al_n\otimes e_0,\quad\mbox{and}\quad\Delta(e_\omega)=ae_\omega\otimes\al_1\dots\al_n$$
where $a=a_n + b_m=c_1 + d_1.$
As a consequence of these equalities we have that $$\Delta(e_i)=a\al_i\dots\al_n\otimes\al_1\dots\al_{i-1},\;\mbox{for}\;i=1,\dots n-1$$ and
$$\Delta(e_{n+i})=a\beta_i\dots\beta_m\otimes\beta_1\dots\beta_{i-1},\;\mbox{for}\; i=1,\dots ,m-1.$$
\end{proof}

The next corollary is a generalization of the previous proposition and is proved in an analogous way. We call the algebra involved a generalized commutative diamond.

\begin{cor}\label{diamondg}
If $\displaystyle{\mathcal{A}=\frac{\k Q}{\mathcal{I}}}$ with $Q$ a quiver with $t$ branches ($t\geq 2$) and $\mathcal{I}=\langle\alpha^{(i)}-\alpha^{(1)}: i=2,\cdots t\rangle$. Then $\mathcal{A}$ has $\operatorname{Frobdim}\mathcal{A}=1$. Moreover, the only nearly Frobenius structure over $\mathcal{A}$ is the linear structure in each branch.
\end{cor}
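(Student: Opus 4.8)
The plan is to follow the proof of the preceding Proposition almost verbatim, the only new feature being that the two branches are replaced by the $t$ branches $\al^{(j)}=\al_1^{(j)}\cdots\al_{n_j}^{(j)}$, $j=1,\dots,t$, and that the relations $\mathcal{I}=\langle\al^{(i)}-\al^{(1)}:i=2,\dots,t\rangle$ force every complete path from $0$ to $\omega$ to become one and the same element of $\mathcal{A}$; write $w:=\al^{(1)}=\cdots=\al^{(t)}$. First I would pin down $\Delta$ on the two extreme idempotents. Because $0$ is the unique source, the only path ending at $0$ is $e_0$, so the conditions $\Delta(e_0)=(e_0\ot 1)\Delta(e_0)=\Delta(e_0)(1\ot e_0)$ leave
$$\Delta(e_0)=a_0\,e_0\ot e_0+\sum_{j=1}^{t}\sum_{i=1}^{n_j}a_i^{(j)}\,\al_1^{(j)}\cdots\al_i^{(j)}\ot e_0,$$
and dually, since $\omega$ is the unique sink,
$$\Delta(e_\omega)=b_0\,e_\omega\ot e_\omega+\sum_{j=1}^{t}\sum_{i=1}^{n_j}b_i^{(j)}\,e_\omega\ot\al_i^{(j)}\cdots\al_{n_j}^{(j)}.$$

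The decisive step is to evaluate on $w$. Using $\Delta(w)=\Delta(e_0)(1\ot w)=(w\ot 1)\Delta(e_\omega)$ together with $e_0w=we_\omega=w$ gives an equality in $\mathcal{A}\ot\mathcal{A}$ whose left side is a combination of $e_0\ot w$, the proper-prefix tensors $\al_1^{(j)}\cdots\al_i^{(j)}\ot w$ with $i<n_j$, and $w\ot w$ (the prefixes with $i=n_j$ all collapsing to $w$, so its coefficient is $\sum_j a_{n_j}^{(j)}$), while the right side is a combination of $w\ot e_\omega$, the proper-suffix tensors $w\ot\al_i^{(j)}\cdots\al_{n_j}^{(j)}$ with $i>1$, and $w\ot w$ (coefficient $\sum_j b_1^{(j)}$). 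Since distinct basis paths of $\mathcal{A}$ yield linearly independent elementary tensors, and the only tensor common to both sides is $w\ot w$, every coefficient except those of $w\ot w$ must vanish and the two coefficients of $w\ot w$ must agree. Hence $a_0=0$, $a_i^{(j)}=0$ for $i<n_j$, $b_0=0$, $b_i^{(j)}=0$ for $i>1$, and
$$\Delta(e_0)=a\,\al^{(1)}\ot e_0,\qquad \Delta(e_\omega)=a\,e_\omega\ot\al^{(1)},\qquad a:=\sum_{j}a_{n_j}^{(j)}=\sum_{j}b_1^{(j)}.$$

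With the extremes fixed, I would propagate to an arbitrary intermediate vertex $v$, lying on branch $j$ with incoming prefix $p=\al_1^{(j)}\cdots\al_i^{(j)}$ from $0$ and outgoing suffix $q=\al_{i+1}^{(j)}\cdots\al_{n_j}^{(j)}$ to $\omega$, so that $pq=w$. In a toupie quiver $v$ has a unique in- and out-arrow, hence the only paths through $v$ run along branch $j$; the relations $\Delta(p)=\Delta(e_0)(1\ot p)=(p\ot 1)\Delta(e_v)$ and $\Delta(q)=\Delta(e_v)(1\ot q)=(q\ot 1)\Delta(e_\omega)$ then force, exactly as in the Proposition, $\Delta(e_v)=a\,q\ot p=a\,\al_{i+1}^{(j)}\cdots\al_{n_j}^{(j)}\ot\al_1^{(j)}\cdots\al_i^{(j)}$. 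Thus every nearly Frobenius coproduct is determined by the single scalar $a$, so $\operatorname{Frobdim}\mathcal{A}\leq1$; conversely one checks directly, evaluating $\Delta$ on each arrow, that this assignment satisfies both nearly Frobenius diagrams for every $a\in\k$, so $\operatorname{Frobdim}\mathcal{A}=1$ and the unique structure is the stated linear one on each branch.

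I expect the main obstacle to be the bookkeeping of linear independence in $\mathcal{A}\ot\mathcal{A}$ at the decisive step: one must carefully separate the proper sub-path tensors, which stay independent and therefore carry vanishing coefficients, from the complete-path tensor $w\ot w$, into which the top coefficients of all $t$ branches are simultaneously absorbed. This collapse of the a priori $1+\sum_j n_j$ free coefficients at each of $e_0$ and $e_\omega$ down to one common scalar $a$ is precisely what the multi-branch case adds to the two-branch Proposition, and is what must be argued with care.
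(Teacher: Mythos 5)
Your proposal is correct and follows essentially the same route as the paper, which proves this corollary simply by declaring it "analogous" to the preceding commutative-diamond proposition: constrain $\Delta(e_0)$ and $\Delta(e_\omega)$ via the idempotent conditions, evaluate on the common branch class $w$ to collapse everything to a single scalar $a$, and propagate to the intermediate vertices. Your explicit bookkeeping of how the top coefficients of all $t$ branches get absorbed into the single coefficient of $w\otimes w$ is exactly the detail the paper leaves implicit.
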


\begin{thm}\label{toupie}
Let $\mathcal{A}$ be a toupie algebra over a field $\k$. If $m$, the number of monomial relations, is zero and $\mathcal{\mathcal{A}}$ is not the linear quiver $A_{n}$, then $\operatorname{Frobdim}\mathcal{A}=0$, except from the case of the (generalized) commutative diamond, in which $\operatorname{Frobdim}\mathcal{A}=1$.

\end{thm}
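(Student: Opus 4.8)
The plan is to follow the strategy of Theorem \ref{canonicas}, first pinning down the coproduct on the idempotents $e_0$ and $e_\omega$ and then propagating along the branches. Write the branches of the toupie quiver as $\al^{(j)}=\al^{(j)}_1\cdots\al^{(j)}_{n_j}$ for $j=1,\dots,t$. Since every intermediate vertex of a toupie has a unique incoming and a unique outgoing arrow, the only pairs of distinct parallel paths run from $0$ to $\omega$; hence, because $m=0$, every relation is a linear combination of the full branches and no branch is zero in $\mathcal{A}$. Let $B=\la \al^{(1)},\dots,\al^{(t)}\ra\subseteq\mathcal{A}$ be the span of the branches and $d=\dk B$. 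As in Theorem \ref{canonicas}, the idempotent conditions $\d(e_0)=(e_0\ot 1)\d(e_0)=\d(e_0)(1\ot e_0)$ and the analogous ones at $\omega$ force
$$\d(e_0)=a_0\,e_0\ot e_0+\sum_{j,i}a^{(j)}_i\,(\al^{(j)}_1\cdots\al^{(j)}_i)\ot e_0,\qquad \d(e_\omega)=b_0\,e_\omega\ot e_\omega+\sum_{j,i}b^{(j)}_i\,e_\omega\ot(\al^{(j)}_i\cdots\al^{(j)}_{n_j}).$$

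Next I would evaluate $\d$ on each full branch via $\d(\al^{(k)})=\d(e_0)(1\ot\al^{(k)})=(\al^{(k)}\ot 1)\d(e_\omega)$ and compare the two sides in $\mathcal{A}\ot\mathcal{A}$, sorting terms by the target of the first tensor factor and the source of the second. On the left every first factor is a prefix of $\al^{(j)}$, ending at $\omega$ only when $i=n_j$; on the right every first factor is $\al^{(k)}$, ending at $\omega$. Thus all summands whose first factor ends at a vertex $\neq\omega$ (the $e_0$ term and the proper prefixes) have no counterpart, and since distinct intermediate vertices carry linearly independent prefixes their coefficients vanish; dually for the proper suffixes on the right. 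This kills $a_0,b_0$, all $a^{(j)}_i$ with $i<n_j$ and all $b^{(j)}_i$ with $i>1$, leaving
$$\d(e_0)=A\ot e_0,\qquad \d(e_\omega)=e_\omega\ot B',\qquad A:=\sum_j a^{(j)}_{n_j}\al^{(j)},\quad B':=\sum_j b^{(j)}_1\al^{(j)}\in B,$$
together with the key identity $A\ot\al^{(k)}=\al^{(k)}\ot B'$ in $\mathcal{A}\ot\mathcal{A}$ for every $k$.

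The heart of the argument is this identity. For nonzero simple tensors, $x\ot y=z\ot w$ forces $x\parallel z$ and $y\parallel w$; applying this with $x=A$, $y=\al^{(k)}=z$, $w=B'$ and using $\al^{(k)}\neq0$, we obtain that either $A=B'=0$ or $A$, $B'$ and every branch $\al^{(k)}$ are proportional, i.e. $d=1$. If $d\geq2$ the first alternative holds, so $\d(e_0)=\d(e_\omega)=0$, and I would propagate this to all intermediate vertices: the same idempotent computation shows $\d(e_v)$ is supported on (branch-$j$ suffix from $v$)$\,\ot\,$(branch-$j$ prefix to $v$), and for the prefix $P$ from $0$ to $v$ the relation $\d(e_0)(1\ot P)=(P\ot 1)\d(e_v)$ gives $(P\ot 1)\d(e_v)=0$; since $P$ times each suffix runs through distinct, nonzero prefixes of $\al^{(j)}$ (here $m=0$ is used again), the tensors involved are linearly independent and $\d(e_v)=0$. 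As every path $p$ satisfies $\d(p)=(p\ot 1)\d(e_{t(p)})$, this yields $\d\equiv0$, so $\operatorname{Frobdim}\mathcal{A}=0$ whenever $d\geq2$.

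It remains to treat $d=1$: all branches are proportional, $\al^{(j)}=\mu_j\al^{(1)}$ with $\mu_j\in\k^\times$, and rescaling the first arrow of each branch normalises $\mu_j=1$, so $\mathcal{A}$ is a generalized commutative diamond and $\operatorname{Frobdim}\mathcal{A}=1$ by Corollary \ref{diamondg}. Since $t=1$ is precisely the excluded linear quiver $A_n$, every remaining toupie with $m=0$ has $t\geq2$ and falls into exactly one of the two cases, proving the statement. I expect the main obstacle to be the coefficient bookkeeping in $\mathcal{A}\ot\mathcal{A}$ of the second step, where one must carefully use that parallel paths occur only between $0$ and $\omega$ and that no branch vanishes; once the identity $A\ot\al^{(k)}=\al^{(k)}\ot B'$ is isolated, the tensor-rank dichotomy and the identification of $d=1$ with the generalized commutative diamond are routine.
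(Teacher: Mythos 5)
Your proof is correct and follows essentially the same route as the paper: the idempotent conditions pin down $\Delta(e_0)$ and $\Delta(e_\omega)$, evaluating on the branches reduces them to full-branch terms satisfying $\Delta(e_0)(1\otimes\alpha^{(k)})=(\alpha^{(k)}\otimes 1)\Delta(e_\omega)$, and linear independence of two branches forces everything to vanish unless all branches are proportional, which is the generalized diamond case handled by Corollary \ref{diamondg}. Your only deviations are cosmetic: you phrase the final step as a simple-tensor rank dichotomy where the paper compares coefficients in a basis of independent branches, and you spell out the propagation of $\Delta(e_v)=0$ to intermediate vertices, which the paper leaves implicit.
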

\begin{proof}
Let us first consider an order in the branches of the toupie algebra. Suppose that there are $t$ branches. We can modify the non-monomial relations to have the first $D$ branches to be linearly independent $\{\alpha^{(1)}, \cdots \alpha^{(D)}\}$ and for $i=D+1,\cdots ,t$, $\alpha^{(i)}=\sum_{j=1}^{D}\lambda_{ij}\alpha^{(j)}$. Let us call $n_{j}$ the length of $\alpha^{(j)}$
Now, as in Theorem \ref{canonicas},
$$\Delta(e_{0})= a_{0}e_{0}\otimes e_{0}+\sum_{j=1}^{t}\sum_{i=1}^{n_{j}} a_{i}^{j}\alpha^{(j)}_{1}\cdots \alpha^{(j)}_{i}\otimes e_{0}$$
and the coproduct in $e_{\omega}$ is
$$\Delta(e_{\omega})= b_{0}e_{\omega}\otimes e_{\omega}+\sum_{j=1}^{t}\sum_{i=1}^{n_{j}} b_{i}^{j}e_{\omega}\otimes \alpha^{(j)}_{i}\cdots \alpha^{(j)}_{n_{j}}.$$
The coproduct in $\alpha^{(1)}$ is given by
$$\Delta\bigl(\alpha^{(1)}\bigr)=\Delta(e_{0})\bigl(1\otimes \alpha^{(1)}\bigr)=\bigl(\alpha^{(1)}\otimes 1\bigr)\Delta(e_{\omega}).$$
Then
$$a_0e_0\otimes \al^{(1)}+\sum_{j=1}^{t}\sum_{i=1}^{n_{j}} a_{i}^{j}\alpha^{(j)}_{1}\cdots \alpha^{(j)}_{i}\otimes \alpha^{(1)}=b_{0}\al^{(1)}\otimes e_{\omega}+\sum_{j=1}^{t}\sum_{i=1}^{n_{j}} b_{i}^{j}\al^{(1)}\otimes \alpha^{(j)}_{i}\cdots \alpha^{(j)}_{n_{j}}$$
By comparison we deduce that
$$\Delta\bigl(\alpha^{(1)}\bigr)=\sum_{i=1}^ta_{n_i}^{i}\al^{(i)}\ot\al^{(1)}=\sum_{j=1}^tb_1^{j}\al^{(1)}\otimes \al^{(j)}.$$
Finally, if we replace $\al^{(i)}$ by $\sum_{j=1}^{D}\lambda_{ij}\alpha^{(j)}$ for $i> D$ we obtain that
$$\Delta(e_0)=a\al^{(1)}\ot e_0,\quad \Delta(e_\omega)=ae_\omega\otimes \al^{(1)}.$$
In the case that the toupie algebra is the generalized diamond see Corollary \ref{diamondg}. If not, it has at least two linearly independent branches $\alpha^{(1)}$ and $\alpha^{(2)}$,
$$\Delta\bigl(\alpha^{(2)}\bigr)=\Delta(e_{0})\bigl(1\otimes \alpha^{(2)}\bigr)=\bigl(\alpha^{(2)}\otimes 1\bigr)\Delta(e_{\omega}).$$
Then, $\Delta\bigl(\alpha^{(2)}\bigr)=a\al^{(1)}\ot\al^{(2)}=a\al^{(2)}\ot\al^{(1)}$. Therefore $\Delta(e_0)=\Delta(e_\omega)=0$. This implies that $\Delta\bigl(\alpha^{(i)}\bigr)=0$ and $\Delta\equiv 0.$
\end{proof}

Next we will consider the case of toupie algebras with monomial relations.
\begin{prop}\label{m}
Consider $\mathcal{A}=\frac{A_{n}}{\mathcal{I}}$ with $\mathcal{I}\neq 0$ a monomial ideal. Then $\operatorname{Frobdim}\mathcal{A}\geq 1$.
\end{prop}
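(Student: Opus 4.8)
The plan is to reproduce, for the linear quiver, the explicit relation--splitting coproduct used for string algebras. Write $A_n$ with vertices $v_0, v_1, \dots$ and consecutive arrows $\al_i : v_{i-1} \rt v_i$, so that a nonzero path is determined by its endpoints. Since $\mathcal{I}\neq 0$ is monomial and admissible, it admits a minimal monomial generating set whose elements have length $\geq 2$; I would fix one such generator and relabel its arrows so that it reads $r=\ga_1\ga_2\cdots\ga_\ell$ with $\ell\geq 2$ and vertices $v_0,\dots,v_\ell$ (so $s(\ga_k)=v_{k-1}$, $t(\ga_k)=v_k$), the vertices $v_1,\dots,v_{\ell-1}$ being the intermediate ones. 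Then I would define $\d:\mathcal{A}\rt\mathcal{A}\ot\mathcal{A}$ by
$$\d(e_{v_i}) = \ga_{i+1}\cdots\ga_\ell \ot \ga_1\cdots\ga_i \qquad (1\leq i\leq \ell-1),$$
and $\d(e_v)=0$ on every other vertex.

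First I would check that $\d$ is a nearly Frobenius coproduct, that is, that the identity $\d(e_{s(\al)})(1\ot\al)=(\al\ot 1)\d(e_{t(\al)})$ holds on every arrow and propagates to composites, exactly as in the string-algebra argument. For an arrow $\ga_i$ with both endpoints intermediate, both sides equal $\ga_i\cdots\ga_\ell\ot\ga_1\cdots\ga_i$. For the extreme arrows $\ga_1$ and $\ga_\ell$, one side is $0$ because $\d(e_{v_0})=\d(e_{v_\ell})=0$, while the other side produces the factor $r$, which vanishes in $\mathcal{A}$. Every arrow off $r$ has neither endpoint intermediate, so both sides are $0$. The same mechanism disposes of composite paths: a path running between two intermediate vertices is a subpath of $r$ and both expressions reproduce the corresponding value, whereas a path that enters before $v_0$ or exits after $v_\ell$, once multiplied by the complementary segment of $r$, becomes a path containing $r$ and hence $0$, so $\d$ is consistent and well defined on the quotient.

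Finally I would argue that $\d\neq 0$: by minimality of $r$ no proper subpath of $r$ belongs to $\mathcal{I}$, and admissibility ($\mathcal{I}\subseteq R^2_Q$) excludes arrows from $\mathcal{I}$, so $\ga_1$ and $\ga_2\cdots\ga_\ell$ are nonzero in $\mathcal{A}$ and therefore $\d(e_{v_1})=\ga_2\cdots\ga_\ell\ot\ga_1\neq 0$. Hence $\d$ is a non-trivial element of the Frobenius space and $\operatorname{Frobdim}\mathcal{A}\geq 1$. The delicate point, and the one I would be most careful about, is well-definedness: the relation $r=0$ must annihilate every boundary term created when $\d$ is transported along the arrows without collapsing $\d$ itself. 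This balance is exactly what the choice of $r$ minimal (keeping the interior tensor factors alive) together with $r=0$ in the quotient (killing every term that leaves the support of $r$) guarantees.
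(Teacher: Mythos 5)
Your proof is correct and follows essentially the same route as the paper: both split a chosen monomial relation $r=\gamma_1\cdots\gamma_\ell$ by setting $\Delta(e_{v_i})=\gamma_{i+1}\cdots\gamma_\ell\otimes\gamma_1\cdots\gamma_i$ on the intermediate vertices and zero elsewhere, then verify compatibility by observing that every boundary term acquires $r$ as a subpath and hence vanishes. Your explicit appeal to minimality of the chosen generator to guarantee $\Delta(e_{v_1})\neq 0$ is a point the paper leaves implicit, but the construction is the same.
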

\begin{proof}
Suppose that $\mathcal{A}$ has a relation of length $r+1$ of the form:

$$\scalebox{0.8}{\xymatrix{
\underset{1}{\bullet}\ar[r]^{\al_1}&\underset{2}{\bullet}\ar[r]^{\al_2}&\underset{3}{\bullet}\ar@{..}[r]& \underset{m-1}{\bullet}\ar[r]&\underset{m}{\bullet}\ar[r]^{\al_m}\ar@/^2pc/@{..}[rrr]&\underset{m+1}{\bullet}\ar@{..}[r]&\underset{m+r}{\bullet}\ar[r]^{\al_{m+r}}&\underset{m +r+1}{\bullet}\ar@{..}[r]&\underset{m+r+n-1}{\bullet}\ar[r]^{\al_{m+r+n-1}}&\underset{m+r+n}{\bullet}
}}$$

Let us now define a coproduct in $\mathcal{A}$:\\
$\Delta(e_{m+i})=\al_{m+i}\dots \al_{m+r}\ot \al_{m}\dots \al_{m+i-1}$ if $i=1,\dots r$ and $\Delta(e_{p})=0 $ otherwise.

If $p\leq m$ and there exists a non-null path $w$ from $p$ to $m+i$ with $1\leq i\leq r$ then,
$\Delta(w)=\Delta(e_{p})w=0$ by definition and on the other hand
$\Delta(w)=w\Delta(e_{m+i})=0$
since $\al_{m}\dots \al_{m+i-1}$ must be included in $w$.

If $p\geq m+r+1$ the argument is analogous.

Finally, if $1\leq p<q \leq r$, the only path from $m+p$ to $m+q$ is $\al_{m+p}\dots \al_{m+q-1}$ and,
$\Delta(\al_{m+p}\dots \al_{m+q-1})=\Delta(e_{m+p})\al_{m+p}\dots \al_{m+q-1}= \al_{m+p}\dots \al_{m+q-1}\Delta(e_{m+q})$ so we conclude that $\Delta$ is a coproduct.

\end{proof}

The next theorem describes toupie algebras with nearly Frobenius structures.
\begin{thm}
Let $\mathcal{A}$ be a toupie algebra over a field $\k$ and $m$ the number of branches with monomial relations, then
\begin{enumerate}
  \item[(1)] if $m=0$
  \begin{enumerate}
    \item[(a)] and $\mathcal{A}$ is the linear quiver $A_{n}$ or the (generalized) commutative diamond we have that $\operatorname{Frobdim}\mathcal{A}=1$,
    \item[(b)] in other case $\operatorname{Frobdim}\mathcal{A}=0$,
  \end{enumerate}
  \item[(2)] if $m>0$ then $\displaystyle{\operatorname{Frobdim}\mathcal{A}\geq 1}.$

\end{enumerate}
\end{thm}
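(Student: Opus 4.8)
The plan is to deduce the final theorem from the results already established in the excerpt, treating each case separately; the statement is essentially a catalogue collecting the previous propositions and theorems. Case (1), where $m=0$, splits naturally. For part (1)(a), if $\mathcal{A}$ is the linear quiver $A_n$ with no relations, then I would observe that each intermediate vertex admits a one-parameter family of coproducts, but the compatibility forced by $\Delta(\alpha_i) = (\alpha_i \otimes 1)\Delta(e_{i}) = \Delta(e_{i-1})(1 \otimes \alpha_i)$ ties all the parameters together, leaving a single free scalar, so $\operatorname{Frobdim}\mathcal{A} = 1$; for the (generalized) commutative diamond this is exactly the content of the Proposition on commutative diamonds and its Corollary \ref{diamondg}. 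Part (1)(b) is precisely Theorem \ref{toupie}, which already asserts that $\operatorname{Frobdim}\mathcal{A} = 0$ in every remaining $m=0$ case.

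For case (2), where $m > 0$, the goal is to exhibit at least one non-trivial nearly Frobenius coproduct, so it suffices to produce a single branch carrying a monomial relation and restrict attention to it. The idea is to mimic the construction in Proposition \ref{m}: pick a branch of the toupie quiver containing a monomial relation $\alpha_m \cdots \alpha_{m+r}$, and define $\Delta(e_{m+i}) = \alpha_{m+i}\cdots\alpha_{m+r} \otimes \alpha_m \cdots \alpha_{m+i-1}$ for the intermediate vertices $m+1, \dots, m+r$ along that branch, setting $\Delta$ to zero on all other vertices. Since this branch sits inside the toupie quiver exactly as the relation sits inside $A_n$, the verification that $\Delta$ respects the Frobenius coproduct axioms on arrows internal to the branch is identical to Proposition \ref{m}.

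The one genuinely new point, and the step I expect to be the main obstacle, is that in a toupie the source $0$ and sink $\omega$ carry traffic from several branches, so I must check that paths entering or leaving the chosen branch from outside do not break the coproduct. The key structural fact is that a toupie has a unique source and unique sink, and every intermediate vertex has exactly one incoming and one outgoing arrow; hence any path reaching an intermediate vertex $m+i$ of the chosen branch from a vertex $p$ outside the branch must pass through $0$ and then traverse $\alpha_m \cdots \alpha_{m+i-1}$, so it already contains the left tensor factor and $\Delta$ evaluated on it vanishes by the relation, exactly as in the string-algebra argument. I would spell this out by checking that $\Delta(w) = w\,\Delta(e_{m+i}) = 0$ for such an external path $w$, using that $\alpha_m \cdots \alpha_{m+i-1}$ is forced to appear as a subpath, and symmetrically for paths leaving the branch toward $\omega$.

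Finally I would note that the coproduct so constructed is non-zero on $e_{m+1}$ (for instance), hence $\operatorname{Frobdim}\mathcal{A} \geq 1$, completing case (2) and the theorem.
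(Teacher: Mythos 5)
Your proposal is correct and follows essentially the same route as the paper: parts (1)(a) and (1)(b) are delegated to the earlier results (Theorem \ref{toupie}, Corollary \ref{diamondg}, and the known $A_n$ case), and part (2) extends the coproduct of Proposition \ref{m} by zero to the rest of the toupie. The only difference is that where you verify directly that paths entering or leaving the chosen branch cause no trouble (which is in fact immediate, since the unique source together with the in/out-degree-one condition on intermediate vertices means no path from another branch can reach an intermediate vertex of the chosen branch), the paper instead invokes Theorem 4 of \cite{AGL15} to control the coproduct on the non-monomial branches.
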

\begin{proof}
\begin{enumerate}
  \item[\emph{(1)}]
  \begin{enumerate}
    \item[\emph{(a)}] This result is a consequence of Theorem 1 of \cite{AGL15} and the Corollary \ref{diamondg}.
    \item[\emph{(b)}] It is the Theorem \ref{toupie}.
  \end{enumerate}
  \item[\emph{(2)}] If there is only one branch and has monomial relations is the case of Proposition \ref{m}. If not, using Theorem 4 of \cite{AGL15}, the coproduct over the branches is zero except for the monomial branches, moreover, the coproduct on the first and the last arrow of the monomial branches is zero. Then, combining this result with Proposition \ref{m} over any monomial branch we have that
          $$\operatorname{Frobdim}\mathcal{A}\geq 1.$$
 \end{enumerate}

\end{proof}


\section{Final Comment}

After computing the nearly Frobenius structures in some representative classes of algebras, we observe that some local situations guarantee the existence of non-trivial nearly Frobenius structures. We summarized them in the following result.

\begin{thm}

Let ${\mathcal{A}} = \frac{\Bbbk Q}{I}$ be a finite dimensional algebra. If $Q$ has a local situation in a vertex $v \in Q_0$ as follows

$$1) \xymatrix{
 \bullet\ar[rrdd]_\alpha&&&&\\
&\ar@/^1pc/@{--}[rrd]&&&\\
&& \underset{v}\bullet \ar[rr]^\gamma&&\bullet \\
&\ar@/_1pc/@{--}[rru]&&&\\
 \bullet\ar[rruu]^\beta&&&&
}\quad \quad 2) \xymatrix{
&&&&\bullet \\
&&&&\\  \bullet\ar[rr]^\alpha &  \ar@/^1pc/@{--}[rru]\ar@/_1pc/@{--}[rrd]& \underset{v} \bullet\ar[rrdd]^\gamma\ar[rruu]_\beta&&\\
&&&&\\
&&&&\bullet
}$$

$$3) \xymatrix{
 \bullet\ar[rrdd]_\alpha&&&&\bullet \\
&\ar@/^1pc/@{--}[rrdd]\ar@/^2pc/@{--}[rr]&&&\\
&& \underset{v} \bullet\ar[rruu]_\gamma\ar[rrdd]^\delta&&\\
&\ar@/_1pc/@{--}[rruu]&&&\\
 \bullet\ar[rruu]^\beta&&&&\bullet
}\quad \quad 4) \xymatrix{
  \bullet\ar[rrdd]_\alpha&&&&\bullet \\
&\ar@/^1pc/@{--}[rrdd]\ar@/^2pc/@{--}[rr]&&&\\
&& \underset{v} \bullet\ar[rruu]_\gamma\ar[rrdd]^\delta&&\\
&\ar@/_1pc/@{--}[rruu]\ar@/_2pc/@{--}[rr]&&&\\
 \bullet\ar[rruu]^\beta&&&&\bullet
}$$
\vspace{1cm}
$${5) \xymatrix{{\bullet}\ar[rr]_{\alpha}&\ar@/^2pc/@{--}[rr]&\underset{v}{\bullet} \ar[rr]_{\beta}& &{\bullet}
}},$$
then ${\mathcal{A}}$ has a non-trivial structure of nearly Frobenius algebra.
\end{thm}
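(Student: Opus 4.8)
The plan is to treat all five configurations by a single uniform recipe: in each case I would exhibit a \emph{nonzero} coproduct $\Delta$ concentrated at the central vertex $v$, by prescribing $\Delta(e_v)$ as a linear combination of terms $\rho\otimes\lambda$ with $\rho$ an arrow leaving $v$ and $\lambda$ an arrow entering $v$, and setting $\Delta(e_i)=0$ for every other vertex $i$. Such a $\Delta(e_v)$ lies automatically in $e_v\mathcal{A}\otimes\mathcal{A}e_v$, so the idempotent normalisation $\Delta(e_v)=(e_v\otimes 1)\Delta(e_v)=\Delta(e_v)(1\otimes e_v)$ holds by construction, and $\Delta\not\equiv 0$ precisely because $\Delta(e_v)\neq 0$, which is exactly the non-triviality claimed. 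As in the earlier proofs, I would define $\Delta$ on the idempotents and verify it on the arrows incident to $v$, the values on longer paths being forced by $\Delta(x)=\Delta(e_{s(x)})(1\otimes x)$.

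Configurations $(1)$–$(4)$ are verbatim the four local pictures of Remark \ref{rem1}, so no new computation is needed: the coproducts produced in the proof of Theorem \ref{stringquadratic} serve here as well. Explicitly I would take $\Delta(e_v)=b\gamma\otimes\alpha+c\gamma\otimes\beta$ in case $(1)$, $\Delta(e_v)=b\beta\otimes\alpha+c\gamma\otimes\alpha$ in case $(2)$, $\Delta(e_v)=c\gamma\otimes\alpha$ in case $(3)$, and $\Delta(e_v)=a\gamma\otimes\beta+b\gamma\otimes\alpha+c\delta\otimes\beta+d\delta\otimes\alpha$ in case $(4)$, and invoke the verification already carried out in Theorem \ref{stringquadratic}, noting that it used only the local monomial relations marked by the dashed arcs together with $\Delta$ vanishing on the neighbouring vertices.

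The single genuinely new configuration is $(5)$, a monomial relation $\alpha\beta=0$ of length two through $v$. Here I would set $\Delta(e_v)=\beta\otimes\alpha$. Checking the two families of identities $\Delta(x)=\Delta(e_{s(x)})(1\otimes x)=(x\otimes 1)\Delta(e_{t(x)})$ on the arrows at $v$ gives, for the incoming arrow, $(\alpha\otimes 1)\Delta(e_v)=\alpha\beta\otimes\alpha=0=\Delta(e_{s(\alpha)})(1\otimes\alpha)$, and for the outgoing arrow, $\Delta(e_v)(1\otimes\beta)=\beta\otimes\alpha\beta=0=(\beta\otimes 1)\Delta(e_{t(\beta)})$; both vanish exactly because $\alpha\beta=0$. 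This is the same mechanism as in Proposition \ref{m} and in the non-quadratic string theorem, where a relation is split as $\rho\otimes\lambda$ along its length so that every composition that could occur is forced to zero.

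The step demanding the most care — the one I would write out in full — is checking that each $\Delta$, specified only on idempotents, genuinely extends to a well-defined coproduct on all of $\frac{\k Q}{I}$ satisfying both commuting squares, and not merely on the arrows drawn. Concretely I must verify the consistency relation $\Delta(e_{s(x)})(1\otimes x)=(x\otimes 1)\Delta(e_{t(x)})$ for \emph{every} arrow $x$ incident to $v$, and that $\Delta$ annihilates the relation ideal $I$ so that it descends to the quotient. Both reduce to the fact that, since $\Delta$ is supported at the single vertex $v$, every identity away from $v$ is the trivial $0=0$, while every identity at $v$ is a length-two composition through $v$ sent to zero by the dashed monomial relations; on a relation $r\in I$ one then has $\Delta(r)=\Delta(e_{s(r)})(1\otimes r)=0$. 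The main subtlety to address honestly is that, for a general algebra $\mathcal{A}=\frac{\k Q}{I}$ (unlike the string case, where condition (S1) bounds the valence of each vertex), the depicted arrows and dashed relations must be understood as describing the full local neighbourhood of $v$; granting this, the four verifications of cases $(1)$–$(4)$ and the short computation for case $(5)$ complete the proof.
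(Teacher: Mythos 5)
Your proposal is correct and follows essentially the same route as the paper: cases $(1)$--$(4)$ are dispatched by reusing the coproducts constructed in Theorem~\ref{stringquadratic}, and case $(5)$ by taking $\Delta(e_v)=\beta\otimes\alpha$ and $\Delta=0$ elsewhere, which is exactly the paper's choice. The only difference is that you write out the arrow-by-arrow verification and explicitly flag the reading that the pictures describe the full local neighbourhood of $v$ (needed so that no undepicted arrow at $v$ spoils the bimodule condition), both of which the paper leaves implicit.
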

\begin{proof}

The coproduct in the first four cases is analogous to the ones in Theorem \ref{stringquadratic}. For the last case the coproduct is the following

$$
\Delta(x) = \left\{ \begin{array}{cc}
 \beta\otimes \alpha & \mbox{ if } x = e_v \\
 0 & \mbox{otherwise}
 \end{array}\right.$$

\end{proof}





\end{document}